\def\R{{\mathbb{R}}}
\theoremstyle{plain}
\newtheorem{theorem}{Theorem}
\newtheorem{proposition}{Proposition}
\newtheorem{definition}{Definition}
\newtheorem{lemma}{Lemma} 
\newtheorem{corollary}{Corollary}
\theoremstyle{remark}
\newtheorem{remark}{Remark}
\newtheorem{example}{Examples}
\title[Multiplier rules for Dini-derivatives]{Multiplier rules for Dini-derivatives in a topological vector space}
\author{Mohammed Bachir, Rongzhen Lyu}
\begin{document}

\date{\today} 
\subjclass{}
\address{Laboratoire SAMM 4543, Universit\'e Paris 1 Panth\'eon-Sorbonne, France}

\email{Mohammed.Bachir@univ-paris1.fr}
\email{Rongzhen.Lyu@etu.univ-paris1.fr}
\begin{abstract}
We provide new results of ﬁrst-order necessary conditions of optimality problem in the form of John’s theorem and in the form of Karush-Kuhn-Tucker’s theorem. We establish our result in a topological vector space for problems with inequality constraints and in a Banach space  for problems with equality and inequality constraints. Our contributions consist in the extension of the results known for  the Fr\'echet and Gateaux-differentiable functions as well as for the Clarke's subdifferential of Lipschitz functions  to the more general Dini-differentiable functions. As consequences, we extend the result of B.H. Pourciau in \cite[Theorem 6, p. 445]{Po} from the convexity to the {\it "Dini-pseudoconvexity"}.
\end{abstract}
\maketitle
{\bf Keywords:} Multiplier rule; Karush-Kuhn-Tucker theorem;  Dini derivative.

{\bf 2020 Mathematics Subject Classiﬁcation:} \subjclass{Primary 90C30, 49K99, Secondary  90C48} 
%{\bf msc:} 90C30, 49K99.
%\tableofcontents

\vskip5mm

\section{Introduction} \label{S1}

We consider the first-order necessary conditions for  optimality problems with inequality constraints in a  topological vector space and the first-order necessary conditions for  optimality problems with inequality and equality constraints in a Banach space. We will use the Dini-differentiability which is more general than the Fr\'echet or Gateaux-differentiability already used in the literature  to extend the recent results in \cite{Bl, Yi}. Let $E$ be a topological vector space and $\Omega$ be a nonempty open subset of $E$, and let $f_{i} : \Omega \rightarrow \mathbb{R}$ be functions, where $i \in \lbrace 0, ..., m \rbrace$. The modiﬁed lower Dini derivative of a function $f: E\to \R$ at $x\in \Omega$ in the direction $u\in E$ is defined as follows,
\[
 D^-_M f(x)(u) =\inf_{w\in E} \lbrace D^- f(x)(u+w)-D^-f(x)(w)\rbrace,
\]
where $D^- f(x)(u)$ denotes the lower Dini-derivative of $f$ at $x$ in the direction $u$:
\[
 D^- f(x)(u) := \liminf_{t\to 0^+} \frac{f(x+tu)-f(x)}{t}, 
\]
By convention we take $ D^- f(x)(0)=0$. We say that $f$ is $D^-_M$-differentiable at $x$ if  $D^-_M f(x)(u)$ and $D^- f(x)(u)$ are finite for every $u\in E$. The advantage of working with the modified lower Dini-differential $ D^-_M f(x)$ instead of  the classical lower Dini-differential $D^- f(x)$ is that the modified lower Dini-differential  is always a superlinear functional but it is not always the case for the classical lower Dini-differential, which is only positively homogeneous. However, the two notions coincide if and only if the classical lower Dini-differential $D^- f(x)$ is superlinear (Section \ref{S1}, for more details, we refer also to \cite{CRG, WY}). 

At first, we consider the following problem $(\mathcal{P}_1)$:
\begin{equation*}
(\mathcal{P}_1)
\left \{
\begin{array}
[c]{l}
\max f_0\\
x\in \Omega\\
\forall i \in \lbrace 1, ..., m \rbrace, f_{i}(x) \geq 0
\end{array}
\right. 
\end{equation*}
Our first main result (Theorem \ref{thmp} below) gives a first-order necessary conditions for the infinite-dimensional optimality problems  $(\mathcal{P}_1)$ by using the modified lower Dini-derivative which is more general than the Gateaux-differentiability. Its proof will be given at the end of  Section \ref{S2}, after some lemmas have been proven. 
%Our aim is to weaken the assumptions and to generalize the nown results to a more general setting.
\begin{theorem} \label{thmp}  Let $E$ be a topological vector space and $\hat{x}$ be a solution of problem $(\mathcal{P}_1)$. We assume that:

$(a)$   for all $j\in \lbrace i \in \lbrace 1, ..., m \rbrace : f_{i}(\hat{x}) < 0 \rbrace$, $f_j$ is lower semicontinuous at $\hat{x}$.

$(b)$ for all $i \in \lbrace 0, ..., m \rbrace$, $f_i$ is $D_M^-$-differentiable at $\hat{x}$.

 Then there exist $\lambda^{0}, ..., \lambda^{m} \in \mathbb{R}_{+}$ such that the following conditions hold:

$(i)$ $(\lambda^{0}, ..., \lambda^{m}) \neq (0, ..., 0)$.

$(ii)$ $\forall i \in \lbrace 1, ..., m \rbrace$, $\lambda^{i} f_{i}(\hat{x}) = 0$.

$(iii)$ $\sum_{i=0}^m  \lambda^{i} D^-_M f_{i}(\hat{x})(u) \leq 0$, for all $u\in E$.

If, in addition, we assume that the following assumption is fulﬁlled,

$(c)$ There exists $w\in E$ such that, $D^-_M f_i(\hat{x})(w) > 0$, whenever $i \in \lbrace 1, ..., m \rbrace$ satisfies $f_i(\hat{x}) = 0$,

then, we can take $\lambda^0=1$.
\end{theorem}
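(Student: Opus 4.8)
The plan is to follow the classical Fritz John scheme---use optimality to forbid any common ascent/feasible direction, then turn that emptiness into multipliers via a Gordan-type separation---but carried out through the modified lower Dini-derivatives, whose superlinearity is precisely what makes the separation convex. Write $g_i := D^-_M f_i(\hat x)$ for $i \in \{0,\dots,m\}$ and let $I := \{\, i \in \{1,\dots,m\} : f_i(\hat x)=0\,\}$ be the active set. I would assign multiplier $\lambda^i=0$ to every inactive constraint, so that the complementary slackness $(ii)$ holds for those indices automatically; it then suffices to produce nonnegative multipliers indexed by $\{0\}\cup I$ and pad with zeros. The two structural facts I record first are: (1) taking $w=0$ in the defining infimum gives $g_i(u)=D^-_M f_i(\hat x)(u)\le D^- f_i(\hat x)(u)$ for every $u$; and (2) each $g_i$ is superlinear (superadditive and positively homogeneous), as already asserted in the Introduction.

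The heart of the argument is the claim that the system
\[
 g_0(u)>0 \quad\text{and}\quad g_i(u)>0 \ \ (i\in I)
\]
has no solution $u\in E$. I would argue by contradiction: if such $u$ existed, then by fact (1) we get $D^- f_i(\hat x)(u)\ge g_i(u)>0$ for $i\in\{0\}\cup I$, so by the very definition of the $\liminf$ there is $t_0>0$ with $f_0(\hat x+tu)>f_0(\hat x)$ and $f_i(\hat x+tu)>f_i(\hat x)=0$ for all $i\in I$ and $0<t<t_0$. For the non-active constraints the values $f_i(\hat x)$ are nonzero, and lower semicontinuity (the role of hypothesis $(a)$) keeps $f_i(\hat x+tu)\ge 0$ after possibly shrinking $t_0$; hence $\hat x+tu$ is feasible while strictly increasing $f_0$, contradicting optimality of $\hat x$. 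I expect this step---faithfully transporting the modified-derivative inequalities back to genuine feasibility of the displaced points $\hat x+tu$---to be the main obstacle, since it is the only place where the $\liminf$ definition and the semicontinuity hypothesis must be handled by hand.

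With the claim in hand I would package it as a finite-dimensional separation. Set
\[
 K := \bigl\{ (r_i)_{i\in\{0\}\cup I}\in\R^{1+|I|} : \exists\, u\in E,\ g_i(u)\ge r_i \text{ for all } i \bigr\}.
\]
Superlinearity of the $g_i$ makes $K$ convex: if $r,s\in K$ are realized by $u,v$, then superadditivity and positive homogeneity give $g_i(\alpha u+(1-\alpha)v)\ge \alpha r_i+(1-\alpha)s_i$; and $K$ contains $-\R^{1+|I|}_+$ (take $u=0$), so it is nonempty. The claim says exactly that $K$ is disjoint from the open orthant $\R^{1+|I|}_{++}$. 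Separating these two convex sets in the finite-dimensional space $\R^{1+|I|}$ yields a nonzero $(\lambda^i)_{i\in\{0\}\cup I}$ with $\langle\lambda,r\rangle\le\langle\lambda,s\rangle$ for $r\in K$, $s\in\R^{1+|I|}_{++}$; boundedness below of $\langle\lambda,s\rangle$ on the orthant forces $\lambda\ge 0$, and letting $s\to 0^+$ gives $\sup_{r\in K}\langle\lambda,r\rangle\le 0$. Evaluating the last inequality at $r=(g_i(u))_i$ for arbitrary $u$ produces $\sum_{i\in\{0\}\cup I}\lambda^i g_i(u)\le 0$. Padding with $\lambda^i=0$ on the inactive indices then delivers $(i)$, $(ii)$ and $(iii)$ at once; in a clean write-up I would isolate this passage as a stand-alone Gordan-type alternative for superlinear functionals.

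Finally, under the constraint qualification $(c)$ I would exclude $\lambda^0=0$. If $\lambda^0=0$, then $(iii)$ evaluated at the direction $w$ from $(c)$ reads $\sum_{i\in I}\lambda^i g_i(w)\le 0$, while each summand is $\ge 0$ because $\lambda^i\ge 0$ and $g_i(w)=D^-_M f_i(\hat x)(w)>0$; hence $\lambda^i=0$ for every active $i$ as well, so the whole multiplier vector vanishes, contradicting $(i)$. Therefore $\lambda^0>0$, and dividing all multipliers by $\lambda^0$ normalizes it to $\lambda^0=1$ while preserving $(i)$--$(iii)$.
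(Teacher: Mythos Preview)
Your proof is correct. The central claim—that no direction $u$ satisfies $g_i(u)>0$ simultaneously for all $i\in\{0\}\cup I$—is exactly the paper's Lemma~\ref{lemk} ($A_0=\emptyset$), and your justification via the definition of the $\liminf$ together with lower semicontinuity of the inactive constraints reproduces the content of the paper's Lemmas~\ref{dMP} and~\ref{lem1}. The treatment of hypothesis $(c)$ is also identical to the paper's.

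Where you genuinely diverge is in turning that emptiness into multipliers. The paper introduces the nested sets $A_p$, locates the least index $k$ with $A_k\neq\emptyset$, proves an implication lemma (Lemma~\ref{lemd}) exploiting the superlinearity of $D^-_M f_{k-1}(\hat x)$, and then invokes a generalized Farkas lemma for sublinear functionals from \cite{Hl} that requires a Slater-type consistency condition---hence the case split on whether $A_l$ is empty and the somewhat indirect choice of which $\lambda^i$ equals $1$. Your Gordan-type route is more direct: you build the convex ``attainable values'' set $K\subset\R^{1+|I|}$ from the superlinear $g_i$, observe that the claim makes $K$ disjoint from the open positive orthant, and separate in finite dimensions. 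This needs no consistency hypothesis and no case analysis; the only extra work is the two-line verification that superlinearity of the $g_i$ makes $K$ convex. The paper's approach, by contrast, outsources the convexity and separation to the cited lemma but must manufacture the Slater point, which is what the machinery of $A_p$ and $k$ accomplishes.
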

The above theorem applies in particular to any Lipschitz functions near $\hat{x}$. Notice that in the case when the functions $f_i$ are Gateaux-differentiable at $\hat{x}$, for all $i \in \lbrace 0, ..., m \rbrace$, then $D^-_M f_i (\hat{x})$ coincide with the classical Gateaux-differential $d_G f_i (\hat{x})$ which is a linear functional. Thus, the inequality in  part $(iii)$ of the above theorem becomes an equality and in this way, we get the results established in \cite{Bl, Yi}. 

%\vskip5mm

 In \cite{Bl}, J. Blot considered the problem $(\mathcal{P}_1)$ in the case of $E=\R^n$ and extended the necessary conditions of optimality under the form of Fritz John’s conditions and under the form of Karush-Kuhn-Tucker’s conditions, from continuous Fr\'echet-differentiability to  Gateaux-differentiablitity.  There is another way to generalize the assumption of continuous Fr\'echet-diﬀerentiability by using locally Lipschitzian mappings and the Clarke's subdifferential or approximate subdifferential (see for instance \cite{Cf, JT1,JT2, Jourani}). However, it is known that in general the Clarke's subdifferential of a Gateaux-differentiable function does not always coincide with the Gateaux-derivative of the function. The advantage of the modified lower Dini-derivative in our result is that it applies to Lipschitz functions and moreover it coincides with the Gateaux-derivative when the function is Gateaux-differentiable (unlike Clark's subdifferential). Theorem \ref{thmp} announced above extend the recent results in \cite{Bl, Yi} by weakening the assumptions from Gâteaux-differentiable functions to the more general notion of the {\it modified lower Dini-differenitable} functions. It applies in particular (and in a unified way) to locally Lipschitz mappings, to lower semicontinuous concave functions, as well as to lower semicontinuous Gateaux-differentiable functions.  As a consequence, we also extend the result of B.H. Pourciau in \cite[Theorem 6, p. 445]{Po} from the convex framework to the more general pseudoconvex framework (see Corollary \ref{cor1}).  Extending our results to modified lower Dini-differentiability presents some difficulties due to the fact that unlike Gateaux-differentiability, the modified lower (resp. upper) Dini-differential is not linear in general but just superlinear (resp. sublinear). Lemma \ref{dMP} and a generalized Farkas lemma for sublinear functionals given in \cite[Corollary 2. p. 92]{Hl}, will make it possible to circumvent this difficulty.  
\vskip5mm
Our second main result (Theorem \ref{thmp2}) deals with a necessary condition of first order optimality for a problem with both inequality and equality constraints of the form
\begin{equation*}
(\mathcal{H}_1)
\left \{
\begin{array}
[c]{l}
\max f_0\\
x\in \Omega \\
\forall i \in \lbrace 1, ..., m \rbrace, f_{i}(x) \geq 0\\
h(x)=0
\end{array}
\right. 
\end{equation*}
where $E$ and $W$  are Banach spaces, $\Omega$ be an open subset of $E$ and  $f_i: \Omega \to \R$,  $i \in \lbrace 0, ..., m \rbrace$ and $h: \Omega \to W$ are  mappings. In our second result, we assume that the functions $f_i$ are Lipschitz in a neighborhood of the optimal solution and we will use the modified lower Dini-derivative and for the mapping $h$, we assume the continuous Fr\'echet-differentiability. Then, using the implicit function theorem, we will reduce the problem $(\mathcal{H}_1)$ to the problem $(\mathcal{P}_1)$, without equality constraints. In \cite{Bl, Yi}, it is assumed that the functions are Fr\'echet or Hadamard-differentiable and that $W$ is a finite-dimentional. 
\vskip5mm
This paper is organized as follows. In section 2, we recall the general definitions and notations that we will use later.  In section 3,  we give the proof  of the first main result Theorem \ref{thmp} at the end of the section after proving some lemmas and then we give Corollary \ref{cor1} as a consequence. In section 4, we state and prove our second main result Theorem \ref{thmp2}.

\section{Dini-Derivative: Definitions, notations and recall} \label{S1}
In all the paper $E$ denotes a topological vector space. A function $f: E \to \R$ is said to be lower (resp. upper) Dini-differentiable at $x \in E$ if $D^- f(x)(u)$ (resp. $D^+ f(x)(u)$) is  finite, for all directions $u \in E$, where 
\[
 D^- f(x)(u) := \liminf_{t\to 0^+} \frac{f(x+tu)-f(x)}{t}, 
\]
and
\[
D^+ f(x)(u) :=\limsup_{t\to 0^+} \frac{f(x+tu)-f(x)}{t}.
\]
We always assume that  $D^- f(x)(0)=D^+f(x)(0) = 0$. We see easily that for all $u\in E$, $-D^+ f(x)(u)=D^- (-f)(x)(u)$. We call  $D^- f(x)(\cdot)$ and $D^+ f(x)(\cdot)$, the lower and the upper Dini-differential of $f$ at $x$, respectively.  It is easy to see that on a normed space $(E,\|\cdot\|)$, any real-valued Lipschitz function in some neighborhood of $x\in E$ is both lower and upper Dini-differentiable at $x$ and moreover the map $u\mapsto D^- f(\hat{x})(u)$ (resp. the map $u\mapsto D^+ f(\hat{x})(u)$) is Lipschitz on $E$ with the same constant of Lipschitz of $f$ at $x$. 

Recall that  a function $p: E \rightarrow \mathbb{R}$ is sublinear if for all $x, y \in E$, $p(x+y) \leq p(x) + p(y)$, and for all $x \in E$, for all $\lambda \in \R^+$, $p(\lambda x) = \lambda p(x)$. A function $p: E \rightarrow \mathbb{R}$ is superlinear if for all $x, y \in E$, $p(x+y) \geq p(x) + p(y)$, and for all $x \in E$, for all $\lambda \in \R^+$, $p(\lambda x) = \lambda p(x)$. A function $p$ is sublinear if and only if $-p$ is superlinear. The lower (resp. upper) Dini-derivative of a function at some point is always positively homogeniuous but it is not superlinear (resp. sublinear) in general.  The modiﬁed lower Dini derivative of $f $ at $x$ in the direction $u$ is 
\[
 D^-_M f(x)(u) =\inf_{w\in E} \lbrace D^- f(x)(u+w)-D^-f(x)(w)\rbrace.
\]
In a similar way we define the modified upper Dini derivative of $f $ at $x$ in the direction $u$ as follows 
\[
 D^+_M f(x)(u) =\sup_{w\in E} \lbrace D^+ f(x)(u+w)-D^+f(x)(w)\rbrace.
\]
Notice that $$D^-_M f(x)(u)\leq D^-f(x)(u)\leq D^+f(x)(u) \leq D^+_M f(x)(u)$$ and $D^-_M f(x)(u)=-D^+_M (-f)(x)(u)$, for all $u\in E$. We refer to \cite{CRG, WY} for this notion of derivative and various other notions of directional derivatives, their comparisons as well as for related notions of subdifferentials.  Recall also the following classical directional derivatives which are always finite when $E$ is a normed space and $f$ is Lipschitz near $x$:

$\bullet$ The Clarke derivative of $f$ at $x$ in the direction $u$ is
$$f^\circ (x, u)=\limsup_{t\to 0^+; y\to x} \frac{f(y+tu)-f(y)}{t}.$$

$\bullet$ The Michel-Penot directional derivative of $f$ at $x$ in the direction $u$ is 
$$f^\diamond(x, u)=\sup_{y\in X} \limsup_{t\to 0^+} \frac{f(x+t(u+y))-f(x+ty)}{t}.$$
It is easy to see that for each $u\in E$, we have 
\begin{eqnarray*}
 D^+_M f(x)(u) \leq f^\diamond(x, u) \leq f^\circ (x, u).
\end{eqnarray*}
It is well know that (in a normed space) for a Lipschitz function near $x$, the functions $u\mapsto f^\diamond(x, u)$ and $u\mapsto f^\circ (x, u)$ are sublinear and globally Lipschitz  with the constant of Lipschitz equal to the constant of Lipschitz of $f$ at $x$. It is also known that a Lipschitz function $f$ is Gateaux differentiable at
$x$ if and only if $f^\diamond(x, \cdot)$  is a linear function (This is not always the case for the Clarke directional derivative). For more details concerning the Michel-Penot directional derivative and the upper Dini-derivative see \cite{MPe, Roc, Iof, Be}.

In this article we will assume that $E$ is a topological vector space and we will focus on the $D^-_M$-differentiability in the problem $(\mathcal{P}_1)$ of maximization and by symmetry, on the $D^+_M$-differentiability in the problem $(\mathcal{Q}_1)$ of minimization (see Section \ref{S2}).

\begin{definition} We say that a function $f: E \to \R$ is $D^-_M$-differentiable (resp. $D^+_M$-differentiable) at $x$ if  $D^-_M f(x)(u)$ and  $D^- f(x)(u)$ (resp. $D^+_M f(x)(u)$ and $D^+ f(x)(u)$) are finite for all $u\in E$. The map $D^-_M f(x)$ (resp. $D^+_M f(x)$) will be called the $D^-_M$-differential or the $D^-_M$-derivative (resp.  the $D^+_M$-differential or the $D^+_M$-derivative) of $f$ at $x$.
\end{definition}

The advantage of working with the modified lower (resp. upper) Dini-differentiability instead of  the classical lower (resp. upper) Dini-differentiability is that the modified lower (resp. upper) Dini-differential is  always a superlinear (resp. sublinear) functional but it is not always the case for the classical lower (resp. upper) Dini-differential. The two notions coincide if and only if the classical lower (resp. upper) Dini-differential is superlinear (resp. sublinear). 
\begin{proposition} \label{prop1} Let $E$ be a topological vector space and $f: E \to \R$ be a  function which is $D^-_M$-differentiable (resp. $D^+_M$-differentiable) at $x$. Then, $D^-_M f(x)$ is a superlinear functional on $E$ (resp.  $D^+_M f(x)$ is a sublinear functional on $E$). Moreover, we have that $D^- f(x)$ is superlinear  (resp. $D^+ f(x)$ is sublinear) if and only if $D^- f(x)=D^-_M f(x)$ (resp. $D^+ f(x)=D^+_M f(x)$).
\end{proposition}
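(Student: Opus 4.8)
The plan is to prove the lower (superlinear) case directly and then deduce the upper (sublinear) case from it via the duality relations recorded above, namely $D^+ f(x)(u) = -D^-(-f)(x)(u)$ and $D^+_M f(x)(u) = -D^-_M(-f)(x)(u)$. Throughout I would abbreviate $g := D^- f(x)$ and $h := D^-_M f(x)$. The key standing fact is that the hypothesis of $D^-_M$-differentiability makes both $g(u)$ and $h(u)$ finite for every $u \in E$, so that all the differences appearing below are genuine real numbers and no $\pm\infty$ cancellations can occur.

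First I would note that $g$ is positively homogeneous (this is already recorded in the excerpt; it follows from the substitution $s = \lambda t$ in the $\liminf$ together with $g(0)=0$), and that positive homogeneity transfers to $h$: for $\lambda > 0$ the change of variable $w = \lambda w'$ leaves the defining infimum invariant and factors out $\lambda$, so $h(\lambda u) = \inf_{w'}\{g(\lambda(u+w'))-g(\lambda w')\} = \lambda h(u)$, while $\lambda = 0$ is immediate from $h(0) = \inf_w\{g(w)-g(w)\} = 0$.

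The heart of the matter is superadditivity of $h$, which is a one-line telescoping estimate. Fix $u,v \in E$ and an arbitrary $w \in E$. Applying the definition of $h$ with test vector $v+w$ gives $g(u+v+w)-g(v+w) \geq h(u)$, and applying it with test vector $w$ gives $g(v+w)-g(w) \geq h(v)$; adding these yields $g(u+v+w)-g(w) \geq h(u)+h(v)$. Since $w$ was arbitrary, taking the infimum over $w$ produces $h(u+v) \geq h(u)+h(v)$. Combined with positive homogeneity, this establishes that $h$ is superlinear, which is the first assertion.

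For the equivalence I would first observe that the choice $w=0$ in the infimum gives $h \leq g$ unconditionally (using $g(0)=0$). If $g = D^- f(x)$ is superlinear, then in particular $g(u+w) \geq g(u)+g(w)$, i.e. $g(u+w)-g(w) \geq g(u)$ for every $w$, and taking the infimum over $w$ gives $h \geq g$; hence $g = h$. Conversely, if $g = h$ then $g$ inherits superlinearity from the first part. The upper/sublinear statement then follows by applying everything to $-f$ and invoking the two duality relations. I do not expect a serious obstacle; the only points needing care are the systematic use of finiteness so that the subtractions are legitimate, and the correct choice of the test vectors $v+w$ and $w$ in the telescoping step, which is precisely where superadditivity is generated.
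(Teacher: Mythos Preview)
Your proof is correct and follows essentially the same approach as the paper's own proof: the same telescoping decomposition $g(u+v+w)-g(w)=\bigl(g(u+v+w)-g(v+w)\bigr)+\bigl(g(v+w)-g(w)\bigr)$ is the key step in both, and the equivalence part is identical. The only cosmetic difference is that the paper picks an $\varepsilon$-approximate minimizer $w_\varepsilon$ for $h(u+v)$ and then sends $\varepsilon\to 0$, whereas you bound $g(u+v+w)-g(w)\geq h(u)+h(v)$ for every $w$ and then take the infimum directly; your version is slightly cleaner but the content is the same.
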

\begin{proof} We give the result for $D^-_M f(x)$ the case of $D^+_M f(x)$ is shown in the same way. The positive homogeneity of $D^-_M f(x)$ is clear. Let us prove that  $D^-_M f(x)$ is superadditive. Let $u, v \in E$, then for every $\varepsilon>0$ there exists $w_\varepsilon \in E$ such that 
\begin{eqnarray*}
D^-_M f(x)(u+v) +\varepsilon &>& D^- f(x)(u+v+w_\varepsilon)-D^-f(x)(w_\varepsilon)\\
                                         &=& (D^- f(x)(u+v+w_\varepsilon)-D^- f(x)(v+w_\varepsilon))\\
&& + (D^-f(x)(v+w_\varepsilon) -D^-f(x)(w_\varepsilon)\\
                                         &\geq& D^-_M f(x)(u) +D^-_M f(x)(v).
\end{eqnarray*}
By sending $\varepsilon$ to $0$, we get that $D^-_M f(x)(u+v) \geq D^-_M f(x)(u) +D^-_M f(x)(v)$. Thus, $D^-_M f(x)$ is a superlinear functional on $E$. Now, if $D^- f(x)=D^-_M f(x)$, then $D^- f(x)$ is superlinear as $D^-_M f(x)$ is. To see the converse, suppose that $D^- f(x)$ is superlinear. Then, for every $w\in E$, $D^- f(x)(u+w) - D^- f(x)(w)\geq D^- f(x)(u)$. So, by taking the infinimum over $w\in E$, we get $D^-_M f(x)(u)\geq D^- f(x)(u)$. On the other hand, it is always true that $D^-_M f(x)(u)\leq D^- f(x)(u)$. Hence, $D^-_M f(x)(u)= D^- f(x)(u)$. 
\end{proof}

The following elementary proposition is easy to verify, so we mention it without proof.
\begin{proposition} \label{prop10} Suppose that $E$ is a normed space. Let $f: E \to \R$ be a Lipschitz function in some neighborhood of $x\in E$. Then, $f$ is  both $D^-_M$-differentiable and $D^+_M$-differentiable at $x$. Moreover the maps $u\mapsto D^-_M f(x)(u)$ and $u\mapsto D^+_M f(x)(u)$ are Lipschitz on $E$ with the same constant of Lipschitz of $f$ at $x$.
\end{proposition}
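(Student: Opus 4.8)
The plan is to reduce everything to the $L$-Lipschitz behaviour of the \emph{classical} Dini-differentials, which has already been recalled at the start of this section. Fix a radius $r>0$ and a constant $L\geq 0$ such that $f$ is $L$-Lipschitz on the ball $B(x,r)$. For any direction $u\in E$ and any $t>0$ small enough that $x+tu\in B(x,r)$, the bound $|f(x+tu)-f(x)|\leq Lt\|u\|$ shows that the difference quotients defining $D^-f(x)(u)$ and $D^+f(x)(u)$ lie in $[-L\|u\|,L\|u\|]$; passing to the $\liminf$ and $\limsup$ gives finiteness, and the standard estimate $|D^-f(x)(u_1)-D^-f(x)(u_2)|\leq L\|u_1-u_2\|$ (and likewise for $D^+$) shows that both $D^-f(x)$ and $D^+f(x)$ are $L$-Lipschitz on $E$. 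This is the input I would take as given from the discussion preceding the statement.

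Next I would establish finiteness of $D^-_M f(x)$, and hence $D^-_M$-differentiability. Since $D^-f(x)$ is $L$-Lipschitz, for every $w\in E$ one has
\[
-L\|u\|\ \leq\ D^-f(x)(u+w)-D^-f(x)(w)\ \leq\ L\|u\|.
\]
Taking the infimum over $w$ in the definition of $D^-_M f(x)(u)$ yields $D^-_M f(x)(u)\geq -L\|u\|$, while the choice $w=0$ (with $D^-f(x)(0)=0$) gives $D^-_M f(x)(u)\leq D^-f(x)(u)\leq L\|u\|$. Thus $|D^-_M f(x)(u)|\leq L\|u\|<+\infty$ for every $u$, and together with the finiteness of $D^-f(x)$ this is exactly $D^-_M$-differentiability at $x$.

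The only nontrivial point, and the one I would treat carefully, is the Lipschitz estimate for $u\mapsto D^-_M f(x)(u)$, since this map is defined through an infimum. Fix $u_1,u_2\in E$ and $\varepsilon>0$. By definition of the infimum there is $w_\varepsilon\in E$ with $D^-_M f(x)(u_2)+\varepsilon> D^-f(x)(u_2+w_\varepsilon)-D^-f(x)(w_\varepsilon)$, while $D^-_M f(x)(u_1)\leq D^-f(x)(u_1+w_\varepsilon)-D^-f(x)(w_\varepsilon)$ because the infimum is taken over all shifts. Subtracting, the two $D^-f(x)(w_\varepsilon)$ terms cancel and I am left with
\[
D^-_M f(x)(u_1)-D^-_M f(x)(u_2)< D^-f(x)(u_1+w_\varepsilon)-D^-f(x)(u_2+w_\varepsilon)+\varepsilon\leq L\|u_1-u_2\|+\varepsilon,
\]
using the $L$-Lipschitz property of $D^-f(x)$. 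Letting $\varepsilon\to 0$ and exchanging the roles of $u_1$ and $u_2$ gives $|D^-_M f(x)(u_1)-D^-_M f(x)(u_2)|\leq L\|u_1-u_2\|$, so $D^-_M f(x)$ is $L$-Lipschitz. Finally, rather than repeating the argument for the upper modified differential, I would invoke the duality $D^+_M f(x)(u)=-D^-_M(-f)(x)(u)$ noted above: since $-f$ is $L$-Lipschitz near $x$ with the same constant, the $D^+_M$-differentiability of $f$ and the $L$-Lipschitz property of $D^+_M f(x)$ follow immediately from the statements just proved for $D^-_M(-f)(x)$.
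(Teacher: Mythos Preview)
The paper explicitly omits the proof of this proposition, stating that it ``is easy to verify, so we mention it without proof.'' Your argument is correct and supplies exactly the elementary details the authors leave to the reader: you use the $L$-Lipschitz property of $u\mapsto D^-f(x)(u)$ (which the paper recalls in the paragraph just before) to bound the shifted differences $D^-f(x)(u+w)-D^-f(x)(w)$ uniformly in $w$, giving finiteness of $D^-_M f(x)(u)$; then the standard $\varepsilon$-infimum trick transfers the Lipschitz constant from $D^-f(x)$ to $D^-_M f(x)$; and finally the duality $D^+_M f(x)=-D^-_M(-f)(x)$ handles the upper case. There is nothing to compare against, and no gap in your write-up.
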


\begin{example} \label{Ex1}
We recall below some classical examples to which our result can be applied:

$1/.$ If $E$ is a normed space and $f$ is Lipschitz in a neighborhood of $x \in E$, then $f$ is both $D^-_M$-differentiable and $D^+_M$-differentiable at $x$ and the map $D^-_M f(x)$ is superlinear Lipschitz on $E$ and the map $D^+_M f(x)$  is sublinear Lipschitz on $E$ (see Proposition \ref{prop1} and proposition \ref{prop10}).

$2/.$  If $f$ is Gateaux-differentiable at $x\in E$, that is, the following limit exists 
$$d_G f(x)(u):=\lim_{t\to 0} \frac{f(x+tu)-f(x)}{t}, \hspace{1mm} \forall u\in E,$$
then clearly we have $D^-_M f(x)=D^+_M f(x)=D^- f(x)=D^+f(x)=d_G f(x)$ (a linear functional).

$3/.$ If $f$ is a real-valued convex  function on some convex neighborhood of $x \in E$, then $d^+ f(x)(u)$ (the "right hand " directional derivative of $f$ at $x$ in the direction $u$) exists for all $u\in E$, where 
$$d^+ f(x):=\lim_{t\to 0^+} \frac{f(x+tu)-f(x)}{t}.$$
In this case, we have $D^-_M f(x)(u)=D^+_M f(x)(u)=D^-f(x)(u)=D^+ f(x)(u)=d^+ f(x)(u)$  for all $u\in E$. Moreover, the map $u\mapsto d^+ f(x)(u)$ is a  sublinear functional on $E$ (see for instance \cite[Lemma 1.2]{Ph}). Replacing $f$ by $-f$, we see that if $f$ is a real-valued concave  function on some convex neighborhood of $\hat{x} \in E$, then $d^+ f(x)(u)$ exists and the map $u\mapsto d^+ f(x)(u)$ is a  superlinear functional on $E$.

$4/.$ Using $2/.$ and $3/.$ we see that $h:=f+g$ is $D^-_M$-differentiable at $x$ whenever $f$ is Gateaux-differentiable at $x$ and $g$ is a real-valued concave function. In this case we have $D^-_M h(x)=d^+ h(x)=d_G f(x)+d^+ g(x)$. Note that $h$ is in general, neither necessarily locally Lipschitz, nor Gateaux-differentiable at $x$ nor a concave function. 
\end{example}
%Thus, if $f$ is a concave  function on some convex neighborhood of $x \in E$, then the map $u\mapsto d^+ f(x)(u)$ is a  superlinear functional on $E$.

\section{Optimization with inequality constraint} \label{S2}

This section is dedicated to the proof of Theorem \ref{thmp} which will be given at the end of the section. However, let us note that  since the maps $D^-_M f_i (\hat{x})$ are not linear in general but just superlinear functionals, we obtain the following symmetric version (Theorem \ref{thmp1} below) as an immediate consequence of Theorem \ref{thmp},  replacing $f_i$ by $-f_i$, $i \in \lbrace 0, ..., m \rbrace$ and  by changing {\it maximization} in the problem $\mathcal{P}_1$ by {\it minimization} in the problem $\mathcal{Q}_1$ and by changing  the modified lower Dini-differential by the modified upper Dini-differential which is a sublinear functional. Consider the following problem $\mathcal{Q}_1$ :

 \begin{equation*}
(\mathcal{Q}_1)
\left \{
\begin{array}
[c]{l}
\min f_0\\
x\in \Omega\\
\forall i \in \lbrace 1, ..., m \rbrace, f_{i}(x) \leq 0
\end{array}
\right. 
\end{equation*}

\begin{theorem} \label{thmp1}  Let $\hat{x}$ be a solution of the problem $\mathcal{Q}_1$. We assume that:

$(a)$  for all $j\in \lbrace i \in \lbrace 1, ..., m \rbrace : f_{i}(\hat{x}) < 0 \rbrace$, $f_j$ is upper semicontinuous at $\hat{x}$.

$(b)$ for all $i \in \lbrace 0, ..., m \rbrace$, $f_i$ is $D_M^+$-differentiable at $\hat{x}$.

 Then there exist $\lambda^{0}, ..., \lambda^{m} \in \mathbb{R}_{+}$ such that the following conditions hold:
\begin{itemize}
\item[(i)] $(\lambda^{0}, ..., \lambda^{m}) \neq (0, ..., 0)$.
\item[(ii)] $\forall i \in \lbrace 1, ..., m \rbrace$, $\lambda^{i} f_{i}(\hat{x}) = 0$
\item[(iii)] $\sum_{i=0}^m  \lambda^{i} D^+_M f_{i}(\hat{x})(u) \geq 0$, for all $u\in E$.
\end{itemize}
If, in addition, we assume that the following assumption is fulﬁlled,

$(c)$ There exists $w\in E$ such that, $D^+_M f_i(\hat{x})(w) < 0$, whenever $i \in \lbrace 1, ..., m \rbrace$ satisfies $f_i(\hat{x}) = 0$,

then, we can take $\lambda^0=1$.
\end{theorem}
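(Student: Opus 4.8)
The plan is to deduce Theorem \ref{thmp1} directly from Theorem \ref{thmp} by the change of sign $g_i := -f_i$, $i \in \{0, \dots, m\}$, which turns the minimization problem $\mathcal{Q}_1$ into a maximization problem of type $\mathcal{P}_1$. First I would record the elementary dictionary between the two problems: minimizing $f_0$ is the same as maximizing $g_0 = -f_0$, and each constraint transforms as $f_i(x) \leq 0 \iff g_i(x) = -f_i(x) \geq 0$. Hence $\hat{x}$ is a solution of $\mathcal{Q}_1$ for the data $(f_i)$ if and only if it is a solution of $\mathcal{P}_1$ for the data $(g_i)$; moreover $f_i(\hat{x}) = 0 \iff g_i(\hat{x}) = 0$, so the active and inactive index sets are preserved (with $\{i : f_i(\hat{x}) < 0\}$ carried to $\{i : g_i(\hat{x}) > 0\}$).

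Next I would check that the hypotheses of Theorem \ref{thmp1} for $(f_i)$ translate exactly into the hypotheses of Theorem \ref{thmp} for $(g_i)$. The relevant computations are pure sign bookkeeping, based on the identities recalled in Section \ref{S1}, namely $D^-(-f)(x)(u) = -D^+ f(x)(u)$ and $D^-_M(-f)(x)(u) = -D^+_M f(x)(u)$; equivalently, writing $g = -f$, one has $D^- g(x)(u) = -D^+ f(x)(u)$ and $D^-_M g(x)(u) = -D^+_M f(x)(u)$. Thus assumption (b) of Theorem \ref{thmp1}, the finiteness of $D^+ f_i(\hat{x})$ and $D^+_M f_i(\hat{x})$, is equivalent to the finiteness of $D^- g_i(\hat{x})$ and $D^-_M g_i(\hat{x})$, i.e. to the $D^-_M$-differentiability of $g_i$ required in assumption (b) of Theorem \ref{thmp}. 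Assumption (a) concerns the inactive constraints: upper semicontinuity of $f_j$ at $\hat{x}$ is precisely lower semicontinuity of $g_j = -f_j$ at $\hat{x}$, imposed on the inactive set, so it matches assumption (a) of Theorem \ref{thmp} for $(g_i)$. Finally, the qualification (c) of Theorem \ref{thmp1}, requiring $w$ with $D^+_M f_i(\hat{x})(w) < 0$ on the active constraints, becomes $D^-_M g_i(\hat{x})(w) > 0$ on the same indices, which is exactly condition (c) of Theorem \ref{thmp}.

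With the dictionary in place, I would apply Theorem \ref{thmp} to the family $(g_i)$ to obtain multipliers $\lambda^0, \dots, \lambda^m \in \R_+$, not all zero, with $\lambda^i g_i(\hat{x}) = 0$ and $\sum_{i=0}^m \lambda^i D^-_M g_i(\hat{x})(u) \leq 0$ for all $u \in E$, and with $\lambda^0 = 1$ whenever (c) holds. Translating back, $\lambda^i g_i(\hat{x}) = 0$ is $\lambda^i f_i(\hat{x}) = 0$, and substituting $D^-_M g_i(\hat{x})(u) = -D^+_M f_i(\hat{x})(u)$ and multiplying by $-1$ yields $\sum_{i=0}^m \lambda^i D^+_M f_i(\hat{x})(u) \geq 0$ for all $u \in E$. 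This is exactly conclusions (i)--(iii) of Theorem \ref{thmp1}, and $\lambda^0 = 1$ is preserved under (c). The only point requiring care is the consistent tracking of signs across the four derivative notions and the semicontinuity hypothesis; there is no genuine analytic obstacle here, since all the substantive work is carried by Theorem \ref{thmp}.
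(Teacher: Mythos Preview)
Your proposal is correct and follows exactly the approach indicated in the paper: the authors explicitly present Theorem \ref{thmp1} as ``an immediate consequence of Theorem \ref{thmp}, replacing $f_i$ by $-f_i$'' and passing from $D^-_M$ to $D^+_M$, without writing out the details. Your dictionary $g_i=-f_i$, together with the identities $D^-(-f)=-D^+f$ and $D^-_M(-f)=-D^+_Mf$ from Section \ref{S1}, is precisely this symmetry argument made explicit.
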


 The proof of Theorem \ref{thmp} will be given at  the end of the paper after four lemmas. We start with the following useful lemma.

\begin{lemma} \label{dMP} Let  $\lbrace g_1,...,g_p\rbrace$ be a finite set of upper Dini-differentiable functions at $\hat{x}\in E$.  Then, for all $u\in E$,
$$\limsup_{t\to 0^+} \max_{k\in \lbrace 1,...,p\rbrace}\frac{g_k(\hat{x}+t u)-g_k(\hat{x})}{t} =  \max_{k\in \lbrace 1,...,p\rbrace} D^+g_k(\hat{x})(u).$$
By symmetry, if $\lbrace g_1,...,g_p\rbrace$ is a finite set of lower Dini-differentiable functions at $\hat{x}\in E$ we have, for all $u\in E$,
$$\liminf_{t\to 0^+} \min_{k\in \lbrace 1,...,p\rbrace}\frac{g_k(\hat{x}+t u)-g_k(\hat{x})}{t} =  \min_{k\in \lbrace 1,...,p\rbrace} D^-g_k(\hat{x})(u).$$
\end{lemma}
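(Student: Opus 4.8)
The plan is to fix $u \in E$ and introduce, for each $k \in \{1, \dots, p\}$ and each $t > 0$, the difference quotient
$$\phi_k(t) := \frac{g_k(\hat{x}+tu) - g_k(\hat{x})}{t},$$
so that $D^+ g_k(\hat{x})(u) = \limsup_{t\to 0^+} \phi_k(t)$ and the first claim becomes the purely analytic identity
$$\limsup_{t\to 0^+} \max_{k} \phi_k(t) = \max_k \limsup_{t\to 0^+} \phi_k(t).$$
Since each $g_k$ is upper Dini-differentiable at $\hat{x}$, each term $D^+ g_k(\hat{x})(u)$, and hence the right-hand maximum, is finite.

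First I would establish the easy inequality ``$\geq$''. For each fixed $k$ one has $\max_j \phi_j(t) \geq \phi_k(t)$ for all $t > 0$, and passing to the $\limsup$ as $t \to 0^+$ gives $\limsup_{t\to 0^+}\max_j \phi_j(t) \geq D^+ g_k(\hat{x})(u)$. Taking the maximum over $k$ yields the desired bound.

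The reverse inequality ``$\leq$'' is the main obstacle, and the finiteness of the index set is exactly what makes it work. I would denote by $L$ the left-hand $\limsup$ and choose a sequence $t_n \to 0^+$ with $\max_k \phi_k(t_n) \to L$. For each $n$ I pick an index $k_n$ realizing the maximum, i.e.\ $\phi_{k_n}(t_n) = \max_k \phi_k(t_n)$. Because there are only finitely many indices, the pigeonhole principle furnishes a single index $k^\ast$ and a subsequence $(t_{n_j})_j$ along which $k_{n_j} = k^\ast$ for all $j$. Along this subsequence $\phi_{k^\ast}(t_{n_j}) \to L$, whence $D^+ g_{k^\ast}(\hat{x})(u) = \limsup_{t\to 0^+}\phi_{k^\ast}(t) \geq L$, and in particular $\max_k D^+ g_k(\hat{x})(u) \geq L$. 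This also re-proves that $L$ is finite, since otherwise some $D^+ g_{k^\ast}(\hat{x})(u)$ would be infinite, contradicting upper Dini-differentiability. Combining the two inequalities gives the first identity.

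Finally, the second identity (with $\liminf$ and $\min$) follows by symmetry: I would apply the first part to the functions $-g_1, \dots, -g_p$, which are upper Dini-differentiable at $\hat{x}$ precisely because the $g_k$ are lower Dini-differentiable there, and then use the relations $D^+(-g_k)(\hat{x})(u) = -D^- g_k(\hat{x})(u)$ together with $\min_k a_k = -\max_k(-a_k)$ and $\liminf_{t\to 0^+}\psi(t) = -\limsup_{t\to 0^+}(-\psi(t))$ to translate the maximum/$\limsup$ statement into the minimum/$\liminf$ statement.
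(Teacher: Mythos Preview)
Your argument is correct. For the nontrivial inequality ``$\leq$'' you take a genuinely different route from the paper: you use the sequential characterisation of the $\limsup$, picking $t_n\to 0^+$ along which the left-hand side is realised, and then invoke the pigeonhole principle on the index set to extract a fixed $k^\ast$. The paper instead works with the $\varepsilon$--definition $\limsup_{t\to 0^+}\psi(t)=\inf_{s>0}\sup_{0<t\le s}\psi(t)$: for each $k$ it chooses $t_k>0$ with $\sup_{0<t\le t_k}\phi_k(t)<D^+g_k(\hat{x})(u)+\varepsilon/2$, sets $s_0=\min_k t_k$, and uses the elementary identity $\sup_{0<t\le s_0}\max_k\phi_k(t)=\max_k\sup_{0<t\le s_0}\phi_k(t)$ to bound the left-hand side by $\max_k D^+g_k(\hat{x})(u)+\varepsilon/2$. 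Your sequence/pigeonhole argument is slightly shorter and makes the role of finiteness of the index set very transparent; the paper's approach avoids sequences altogether and is perhaps closer in spirit to the $\inf$--$\sup$ formulation of Dini derivatives used elsewhere in the article. Both handle the symmetry with the $\liminf/\min$ statement the same way.
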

\begin{proof} From the definition of the upper Dini-differentiability, we have that for every $u\in E$, for  every $k\in \lbrace 1,...,p\rbrace$ and every $\varepsilon >0$, there exists $t_k > 0$ such that $\hat{x}+t_k u \in  \Omega_{\hat{x}}$ and
\begin{eqnarray}\label{eq1}
\sup_{0<t\leq t_k}\frac{g_k(\hat{x}+t u)-g_k(\hat{x})}{t} &<& D^+g_k(\hat{x})(u) + \varepsilon/2.
\end{eqnarray}

Set $s_0:=\min \lbrace t_k: k= 1,...,p \rbrace>0$. Then, there exists $k_0\in \lbrace 1,..., p\rbrace$ such that
\begin{eqnarray}\label{eq2}
\max_{k\in \lbrace 1,...,p\rbrace} \sup_{0<t\leq s_0} \frac{g_k(\hat{x}+t u)-g_k(\hat{x})}{t}  &=&  \sup_{0<t\leq s_0} \frac{g_{k_0}(\hat{x}+t u)- g_{k_0}(\hat{x})}{t} \nonumber\\
&\leq& \sup_{0<t \leq t_{k_0}} \frac{g_{k_0}(\hat{x}+t u)- g_{k_0}(\hat{x})}{t}\nonumber\\
&\leq& D^+g_{k_0}(\hat{x})(u) + \varepsilon/2 \nonumber\\
&\leq& \max_{k\in \lbrace 1,...,p\rbrace} D^+g_k(\hat{x})(u)+ \varepsilon/2.
\end{eqnarray}
It follows using $(\ref{eq1})$ and $(\ref{eq2})$ that 
\begin{eqnarray*}
\limsup_{t\to 0^+} \max_{k\in \lbrace 1,...,p\rbrace}\frac{g_k(\hat{x}+t u)-g_k(\hat{x})}{t} &=&  \inf_{s>0} \sup_{0<t\leq s} \max_{k\in \lbrace 1,...,p\rbrace} \frac{g_k(\hat{x}+t u)-g_k(\hat{x})}{t} \\ 
&\leq& \sup_{0<t\leq s_0} \max_{k\in \lbrace 1,...,p\rbrace} \frac{g_k(\hat{x}+t u)-g_k(\hat{x})}{t} \\
&=&\max_{k\in \lbrace 1,...,p\rbrace} \sup_{0<t\leq s_0} \frac{g_k(\hat{x}+t u)-g_k(\hat{x})}{t}\\
 &\leq& \max_{k\in \lbrace 1,...,p\rbrace} D^+g_k(\hat{x})(u)+ \varepsilon/2.
\end{eqnarray*}
Now, by sending $\varepsilon$ to $0$, we get 
\begin{eqnarray*}
\limsup_{t\to 0^+} \max_{k\in \lbrace 1,...,p\rbrace}\frac{g_k(\hat{x}+t u)-g_k(\hat{x})}{t} 
 &\leq& \max_{k\in \lbrace 1,...,p\rbrace} D^+g_k(\hat{x})(u).
\end{eqnarray*}
The converse inequality is trivial. Replacing $g_k$ by $-g_k$ in the above proof, we get by symmetry the second part of the Lemma.
\end{proof}

 In what follows (i.e. in Lemma \ref{lem1}, Lemma \ref{lemk} and Lemma \ref{lemd}), we assume that the functions $f_i$,  $i \in \lbrace 0, ..., m \rbrace$, satisfy the assumptions $(a)$ and $(b)$ of Theorem \ref{thmp} and we assume also that $$\lbrace i \in \lbrace 1, ..., m \rbrace : f_{i}(\hat{x}) = 0 \rbrace \neq \emptyset.$$ Up to a change of index, we denote $\lbrace 1, ..., l \rbrace = \lbrace i \in \lbrace 1, ..., m \rbrace : f_{i}(\hat{x}) = 0 \rbrace $.
The problem $\mathcal{P}_{1}$ can be reduced to the problem $\mathcal{P}_{2}$ as in the following lemma.
\begin{lemma} \label{lem1}  Let $\hat{x}$ be a solution of the problem $\mathcal{P}_1$, then there exists an open subset $\Omega_1$ of $\Omega$ containing $\hat{x}$ and such that  $\hat{x}$ is a solution of the following problem $\mathcal{P}_{2}$:

\begin{equation*}
(\mathcal{P}_2)
\left \{
\begin{array}
[c]{l}
\max f_0\\
x\in \Omega_1\\
\forall i \in \lbrace 1, ..., l \rbrace, f_{i}(x) \geq 0
\end{array}
\right. 
\end{equation*}
\end{lemma}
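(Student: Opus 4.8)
The plan is to localize the maximization around $\hat{x}$ by simply dropping the inequality constraints that are \emph{inactive} at $\hat{x}$, the point being that these remain strictly satisfied on a whole neighborhood of $\hat{x}$, so that deleting them cannot enlarge the admissible set near $\hat{x}$. First I would split the index set. Since $\hat{x}$ is a solution of $\mathcal{P}_1$ it is in particular feasible, so $f_i(\hat{x})\geq 0$ for every $i\in\{1,\dots,m\}$; by the normalization in force the active indices are exactly $\{1,\dots,l\}=\{i:f_i(\hat{x})=0\}$, and hence the remaining indices $i\in\{l+1,\dots,m\}$ are precisely those with $f_i(\hat{x})>0$. If $l=m$ there are no inactive constraints and one takes $\Omega_1:=\Omega$, so I may assume $l<m$.

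Next I would build the neighborhood $\Omega_1$. Each inactive constraint $f_i$ (with $f_i(\hat{x})>0$) is lower semicontinuous at $\hat{x}$ by hypothesis $(a)$, so applying the definition of lower semicontinuity with the threshold $0<f_i(\hat{x})$ shows that the set $\{x\in\Omega: f_i(x)>0\}$ contains an open neighborhood $V_i$ of $\hat{x}$. I would then set
\[
\Omega_1:=\Omega\cap\bigcap_{i=l+1}^{m} V_i ,
\]
which is a finite intersection of open sets contained in the open set $\Omega$; thus $\Omega_1$ is open, contains $\hat{x}$, is contained in $\Omega$, and satisfies $f_i(x)>0$ for every $x\in\Omega_1$ and every inactive index $i\in\{l+1,\dots,m\}$.

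Finally I would check that $\hat{x}$ solves $\mathcal{P}_2$ on this $\Omega_1$. Admissibility of $\hat{x}$ for $\mathcal{P}_2$ is immediate, since $\hat{x}\in\Omega_1$ and $f_i(\hat{x})=0\geq 0$ for $i\in\{1,\dots,l\}$. For optimality, take any $x\in\Omega_1$ admissible for $\mathcal{P}_2$, i.e. with $f_i(x)\geq 0$ for all $i\in\{1,\dots,l\}$. By the construction of $\Omega_1$ one also has $f_i(x)>0\geq 0$ for the inactive indices $i\in\{l+1,\dots,m\}$, so $x\in\Omega$ satisfies all $m$ inequality constraints and is therefore admissible for $\mathcal{P}_1$. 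Since $\hat{x}$ solves $\mathcal{P}_1$, this gives $f_0(x)\leq f_0(\hat{x})$, which is exactly the optimality of $\hat{x}$ for $\mathcal{P}_2$.

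The argument is essentially a routine localization, and I do not expect a genuine obstacle: the one substantive point is the observation that lower semicontinuity of the strictly satisfied constraints is precisely what preserves them on an entire neighborhood, which lets us replace $\Omega$ by $\Omega_1$ and forget the inactive indices. It is worth emphasizing that the $D^-_M$-differentiability assumption $(b)$ plays no role here; only the semicontinuity hypothesis $(a)$ is used. The only places needing minor care are the bookkeeping of the active/inactive split and the degenerate case $l=m$.
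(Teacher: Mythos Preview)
Your proof is correct and follows essentially the same approach as the paper's: use lower semicontinuity of the inactive constraints to find an open neighborhood $\Omega_1$ of $\hat{x}$ on which they remain strictly positive (taking $\Omega_1=\Omega$ when $l=m$), and then observe that any $\mathcal{P}_2$-feasible point in $\Omega_1$ is automatically $\mathcal{P}_1$-feasible. Your write-up is in fact more detailed than the paper's, which leaves the final optimality verification implicit.
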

\begin{proof} When $1 \leq l < m$, we have that $f_{i}(\hat{x}) > 0$, for all $ i \in \lbrace l+1, ..., m \rbrace$. By the property of lower semicontinuity, there exists an open neighborhood $\Omega_{1} \subset \Omega$ of $\hat{x}$ such that $\Omega_1\subset \cap_{i \in \lbrace l+1, ..., m \rbrace} \lbrace x\in \Omega: f_{i}(x) > 0\rbrace$. 
When $l=m$, we take $\Omega_{1} = \Omega$. We then see that $\hat{x}$ is a solution of  $\mathcal{P}_{2}$.
\end{proof}

\begin{definition} \label{defA} $\forall p \in \lbrace 0, ..., l \rbrace$, we define the set:
\[
A_{p} = \lbrace u \in E : \forall i \in \lbrace p, ..., l \rbrace ,  D^-_M f_i (\hat{x})(u) > 0 \rbrace
\]
\end{definition}
 Since, $A_{i} \subset A_{i+1}$ for all $i\in \lbrace 0,...,l-1\rbrace$, we can have the following definition.

\begin{definition} \label{defk} Suppose that $A_{l} \neq \emptyset$ and define
\[
k : = \min \lbrace i \in \lbrace 0, ..., l \rbrace : A_{i} \neq \emptyset \rbrace
\]
\end{definition}

\begin{remark} By definition, we have that $A_{k} \neq \emptyset$ and $A_{k-1} = \emptyset$.
\end{remark}

\begin{lemma} \label{lemk} Suppose that $A_{l} \neq \emptyset$ (so that the Definition \ref{defk} has a sense). Then, we have  $ A_{0} = \emptyset $, in other words, we have $k\geq1$ (where $k$ is the integer number given by Definition \ref{defk}).
\end{lemma}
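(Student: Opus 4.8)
The plan is to argue by contradiction: I will assume $A_0 \neq \emptyset$ and use a point of the form $\hat{x}+tu$ to produce a feasible point of $\mathcal{P}_2$ with strictly larger objective value than $\hat{x}$, contradicting Lemma \ref{lem1}. So suppose there is some $u \in A_0$, that is, $D^-_M f_i(\hat{x})(u) > 0$ for every $i \in \{0, 1, \ldots, l\}$.

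The first step I would take is to pass from the modified lower Dini-derivative to the ordinary one, since it is the latter that is tied directly to the difference quotients. Taking $w=0$ in the infimum defining $D^-_M f_i(\hat{x})(u)$ and using $D^- f_i(\hat{x})(0)=0$ gives $D^-_M f_i(\hat{x})(u) \leq D^- f_i(\hat{x})(u)$, so that $D^- f_i(\hat{x})(u) > 0$ for all $i \in \{0, \ldots, l\}$. By hypothesis $(b)$ each $f_i$ is $D^-_M$-differentiable at $\hat{x}$, hence in particular lower Dini-differentiable there, so the second part of Lemma \ref{dMP} applies to the finite family $\{f_0, f_1, \ldots, f_l\}$ and yields
\[
\liminf_{t\to 0^+}\; \min_{i\in\{0,\ldots,l\}} \frac{f_i(\hat{x}+tu)-f_i(\hat{x})}{t} \;=\; \min_{i\in\{0,\ldots,l\}} D^- f_i(\hat{x})(u) \;=:\; \delta \;>\; 0.
\]
With this single uniform bound in hand, there exists $t_0 > 0$ such that for every $0 < t < t_0$ and every $i \in \{0, \ldots, l\}$ one has $f_i(\hat{x}+tu)-f_i(\hat{x}) > t\delta/2 > 0$; shrinking $t_0$ if necessary, using that $\Omega_1$ is open and that $t\mapsto \hat{x}+tu$ is continuous on the topological vector space $E$, I may also assume $\hat{x}+tu \in \Omega_1$. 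For the active constraints $i\in\{1,\ldots,l\}$ we have $f_i(\hat{x})=0$, whence $f_i(\hat{x}+tu) > 0$, so $\hat{x}+tu$ is feasible for $\mathcal{P}_2$; while for $i=0$ we get $f_0(\hat{x}+tu) > f_0(\hat{x})$. This contradicts the optimality of $\hat{x}$ for $\mathcal{P}_2$ established in Lemma \ref{lem1}. Hence $A_0=\emptyset$, which is precisely $k\geq 1$.

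The only delicate point, and the step I expect to carry the real weight, is the \emph{simultaneity} of the positivity. Each individual condition $D^- f_i(\hat{x})(u) > 0$ is a $\liminf$ statement that a priori controls its difference quotient only along its own sequence of times, and what I need is that all $l+1$ quotients are simultaneously positive on one common interval $(0,t_0)$. Lemma \ref{dMP} is exactly the tool that packages the finite minimum of the quotients into a single $\liminf$, collapsing the $l+1$ separate liminf conditions into one, after which the contradiction with optimality is immediate.
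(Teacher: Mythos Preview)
Your proof is correct and follows essentially the same approach as the paper: argue by contradiction, pass from $D^-_M$ to $D^-$ via the inequality $D^-_M f_i(\hat{x})(u)\leq D^- f_i(\hat{x})(u)$, invoke Lemma~\ref{dMP} to collapse the finite family of liminfs into a single one, and then extract a small $t>0$ giving a feasible point of $\mathcal{P}_2$ with strictly larger objective value. Your exposition is in fact slightly more careful than the paper's (you explicitly ensure $\hat{x}+tu\in\Omega_1$ and highlight why Lemma~\ref{dMP} is the key simultaneity step), but the argument is the same.
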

\begin{proof}  By contradiction, assume that $ A_{0} \neq \emptyset $. Since $D^-_M f_i(\hat{x})(u)\leq D^-f_i(\hat{x})(u)$ for all $u\in E$ and for all $i \in \lbrace 0, ..., l \rbrace$, then $$\emptyset \neq A_0\subset \lbrace u \in E : \forall i \in \lbrace 0, ..., l \rbrace ,  D^- f_i(\hat{x})(u) > 0 \rbrace.$$ Thus, there exists $w \in E$ such that for all $ i \in \lbrace 0, ..., l \rbrace$, $D^- f_{i}(\hat{x})(w) > 0$ and so $\min_{i\in \lbrace 0,...,l\rbrace} D^-f_i(\hat{x})(w)>0$.
Using Lemma \ref{dMP}, we have
$$\liminf_{t\to 0^+} \min_{i\in \lbrace 0,...,l\rbrace}\frac{f_i(\hat{x}+t w)-f_i(\hat{x})}{t} =  \min_{i\in \lbrace 0,...,l\rbrace} D^-f_i(\hat{x})(w)>0.$$
On the other hand, for $\varepsilon=\frac{1}{2}\min_{i\in \lbrace 0,...,l\rbrace} D^-f_i(\hat{x})(w)>0$ there exists $s_\varepsilon>0$ such that 
\begin{eqnarray*}
\inf_{0<t\leq s_\varepsilon} \min_{i\in \lbrace 0,...,l\rbrace}\frac{f_i(\hat{x}+t w)-f_i(\hat{x})}{t} +\varepsilon &>&\liminf_{t\to 0^+} \min_{i\in \lbrace 0,...,l\rbrace}\frac{f_i(\hat{x}+t w)-f_i(\hat{x})}{t}.
\end{eqnarray*}
It follows that 
\begin{eqnarray*}
\inf_{0<t\leq s_\varepsilon} \min_{i\in \lbrace 0,...,l\rbrace}\frac{f_i(\hat{x}+t w)-f_i(\hat{x})}{t} >\frac{1}{2}\min_{i\in \lbrace 0,...,l\rbrace} D^-f_i(\hat{x})(w)>0
\end{eqnarray*}
Hence, for all $i\in \lbrace 0,...,l\rbrace$, we have $f_i(\hat{x}+s_\varepsilon w) >f_i(\hat{x})$. In other words, we have $f_i(\hat{x}+s_\varepsilon w) >f_i(\hat{x})\geq 0$ for all $i\in \lbrace 1,...,l\rbrace$ and $f_0(\hat{x}+s_\varepsilon w) >f_0(\hat{x})$. This contradicts the assumption that $\hat{x}$ is an optimal solution of $\mathcal{P}_{2}$ (see Lemma \ref{lem1}). Therefore, $ A_{0} = \emptyset $.

\end{proof}

\begin{lemma} \label{lemd} Let $k\geq 1$ be the integer number given by Definition \ref{defk} and Lemma \ref{lemk}. Then, we have 
$$(v\in E,   D^-_M f_i(\hat{x})(v) \geq 0, \forall i \in \lbrace k, ..., l \rbrace) \Rightarrow ( D^-_M f_{k-1}(\hat{x})(v)\leq 0).$$
\end{lemma}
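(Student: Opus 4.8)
The plan is to argue by contradiction, exploiting that $A_{k-1} = \emptyset$ while $A_k \neq \emptyset$, together with the superlinearity of each $D^-_M f_i(\hat{x})$ guaranteed by Proposition \ref{prop1}. The whole point of having replaced the classical lower Dini-differential by the modified one is that we may freely use superadditivity, and this is exactly the tool the statement calls for.

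First I would fix $v \in E$ with $D^-_M f_i(\hat{x})(v) \geq 0$ for all $i \in \{k, \dots, l\}$ and suppose, for contradiction, that $D^-_M f_{k-1}(\hat{x})(v) > 0$. Since $A_k \neq \emptyset$ by Definition \ref{defk}, I pick $u_0 \in A_k$, so that $D^-_M f_i(\hat{x})(u_0) > 0$ for every $i \in \{k, \dots, l\}$. The key step is to manufacture an element of $A_{k-1}$ out of $u_0$ and $v$, contradicting $A_{k-1} = \emptyset$. To this end I consider $u_0 + N v$ for a real parameter $N > 0$ and apply superlinearity (positive homogeneity plus superadditivity) of $D^-_M f_i(\hat{x})$ coordinatewise:
$$D^-_M f_i(\hat{x})(u_0 + N v) \geq D^-_M f_i(\hat{x})(u_0) + N\, D^-_M f_i(\hat{x})(v).$$
For $i \in \{k, \dots, l\}$ the right-hand side is $\geq D^-_M f_i(\hat{x})(u_0) > 0$ for every $N > 0$, because $D^-_M f_i(\hat{x})(v) \geq 0$. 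For the index $i = k-1$, the quantity $D^-_M f_{k-1}(\hat{x})(u_0)$ is finite by the $D^-_M$-differentiability assumption $(b)$ of Theorem \ref{thmp}, and $D^-_M f_{k-1}(\hat{x})(v) > 0$ by the contradiction hypothesis, so the right-hand side becomes strictly positive as soon as $N$ is large enough. Fixing such an $N$ yields $D^-_M f_i(\hat{x})(u_0 + N v) > 0$ for all $i \in \{k-1, \dots, l\}$, i.e. $u_0 + N v \in A_{k-1}$, which is absurd since $A_{k-1} = \emptyset$ by the choice of $k$. Hence $D^-_M f_{k-1}(\hat{x})(v) \leq 0$, as claimed.

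The one delicate point — the part I expect to be the crux — is keeping the $\{k, \dots, l\}$-coordinates strictly positive while simultaneously driving the $(k-1)$-coordinate up. This is precisely where superadditivity of the modified lower Dini-differential, rather than mere positive homogeneity, is indispensable: it guarantees that the strict positivity supplied by $u_0$ on $\{k,\dots,l\}$ survives the addition of the large multiple $N v$ with no cancellation, while the term $N\, D^-_M f_{k-1}(\hat{x})(v)$ overwhelms the finite (possibly negative) value $D^-_M f_{k-1}(\hat{x})(u_0)$. Finiteness of $D^-_M f_{k-1}(\hat{x})(u_0)$, which forbids the $(k-1)$-term from being $-\infty$, is what legitimizes the ``$N$ large'' step; everything else is a routine verification against the definitions of $A_{k-1}$ and $A_k$.
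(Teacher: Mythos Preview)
Your argument is correct and follows essentially the same approach as the paper: argue by contradiction, pick an element of $A_k$, and use the superlinearity of $D^-_M f_i(\hat{x})$ to combine it with $v$ into an element of $A_{k-1}$. The only cosmetic difference is the parametrisation of the combination: the paper takes $v+\epsilon\mu$ with $\mu\in A_k$ and $\epsilon>0$ small (splitting into the cases $D^-_M f_{k-1}(\hat{x})(\mu)<0$ and $=0$), whereas you take $u_0+Nv$ with $u_0\in A_k$ and $N$ large, which avoids the case split; the two are equivalent up to positive scaling.
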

\begin{proof}
By contradiction, we assume that there exists $\upsilon \in E$ such that for all $i \in \{k, ..., l\}$, $D^-_M f_{i}(\hat{x})(\upsilon) \geq 0$ and $D^-_M f_{k-1}(\hat{x})(\upsilon) >  0$. Since $A_{k} \neq \emptyset$, there exists $\mu \in E$ such that for all $i \in \{k, ..., l\}$, $D^-_M f_{i}(\hat{x})(\mu) > 0$. Since $A_{k-1} = \emptyset$, $D^-_M f_{k-1}(\hat{x})(\mu) \leq 0$. If $D^-_M f_{k-1}(\hat{x})(\mu) < 0$, we choose $\epsilon$  such that $ 0 < \epsilon < - \frac{D^-_M f_{k-1}(\hat{x})(\upsilon)}{D^-_M f_{k-1}(\hat{x})(\mu)}$. If $D^-_M f_{k-1}(\hat{x})(\mu) = 0$, we arbitrarily take any $\epsilon > 0$. So in any case, we have $D^-_M f_{k-1}(\hat{x})(\upsilon) + \epsilon D^-_M f_{k-1}(\hat{x})(\mu) > 0$. Let $u_{\epsilon} := \upsilon + \epsilon \mu$, by the superlinearity of the modified lower Dini-differential, we have $D^-_M f_{k-1}(\hat{x})(u_{\epsilon}) \geq D^-_M f_{k-1}(\hat{x})(\upsilon) + \epsilon D^-_M f_{k-1}(\hat{x})(\mu) > 0$. On the other hand,  for all $i \in \{k, ..., l\}$, $D^-_M f_{i}(\hat{x})(u_{\epsilon}) \geq D^-_M f_{i}(\hat{x})(\upsilon) + \epsilon D^-_M f_{i}(\hat{x})(\mu) > 0$, this means that $u_{\epsilon} \in A_{k-1}$ which contradicts the fact that $A_{k-1} = \emptyset$. Hence, we have  $(v\in E,   D^-_M f_i (\hat{x})(v) \geq 0, \forall i \in \lbrace k, ..., l \rbrace) \Rightarrow ( D^-_M f_{k-1}(\hat{x})(v)\leq 0).$
\end{proof}

\begin{proof}[Proof of Theorem \ref{thmp}] If for all $i \in \lbrace 1, ..., m \rbrace$, $f_{i}(\hat{x}) > 0$, then parts $(i), (ii), (iii)$ of the theorem holds. Indeed, by the lower semicontinuity of $f_i$ for all $i \in \lbrace 1, ..., m \rbrace$, there exists an open neighborhood $\Omega_{\hat{x}}$ of $\hat{x}$ such that 
$$\Omega_{\hat{x}}\subset \lbrace x\in \Omega: f_{i}(x) \geq 0\rbrace,$$
and so $\hat{x}$ maximizes $f_{0}$ on the open subset $\Omega_{\hat{x}}\subset \Omega$. Then, for all $u \in E$ and for all $t>0$ such that $\hat{x}+tu\in \Omega_{\hat{x}}$, we have that $f_{0}(\hat{x}) \geq f_{0}(\hat{x}+tu)$, which gives that $D^-  f_{0}(\hat{x}) \leq 0$. In particular we have $D^-_M f_{0}(\hat{x})\leq D^-  f_{0}(\hat{x}) \leq 0$. We take $\lambda^{0} = 1$ and $\forall i \in \lbrace 1, ..., m \rbrace$,  $\lambda^{i} = 0$. Therefore, $(i), (ii), (iii)$ hold with these settings of parameters. Thus, in the following, we assume that $\lbrace i \in \lbrace 1, ..., m \rbrace : f_{i}(\hat{x}) = 0 \rbrace \neq \emptyset$. Up to a change of index, we denote $\lbrace 1, ..., l \rbrace = \lbrace i \in \lbrace 1, ..., m \rbrace : f_{i}(\hat{x}) = 0 \rbrace $. Using Lemma \ref{lem1}, there exists an open subset $\Omega_1$ of $\Omega$ containing $\hat{x}$ and such that  $\hat{x}$ is a solution of the following problem
\begin{equation*}
(\mathcal{P}_2)
\left \{
\begin{array}
[c]{l}
\max f_0\\
x\in \Omega_1\\
\forall i \in \lbrace 1, ..., l \rbrace, f_{i}(x) \geq 0
\end{array}
\right. 
\end{equation*}
We have to treat two cases on the set $A_l$ given in Definition \ref{defA}:

{\it Case1:} If  $A_{l} = \emptyset$, there does not exist any $\omega \in E$ such that $ D^-_M f_{l} (\hat{x})(\omega) > 0$, therefore, $D^-_M f_{l}(\hat{x})(u) \leq 0$ for all $u\in E$. We take $\lambda^{l} = 1$, and $\lambda^{i} = 0$, for $i \in \{0, ..., m\} \setminus \{l\}$. In this case,  parts $(i), (ii), (iii)$ of  Theorem \ref{thmp} hold.

{\it Case2:} If $A_{l} \neq \emptyset$, then by Lemma \ref{lemk}, we have that $k\geq 1$. Using Lemma \ref{lemd} we have $$(v\in E, - D^-_M f_{i}(\hat{x})(v) \leq 0, \forall i \in \lbrace k, ..., l \rbrace) \Rightarrow (- D^-_M f_{k-1}(\hat{x})(v) \geq 0).$$ Moreover, the slater condition (the consistency) is satisfied by the definition of $k$ ($A_k\neq \emptyset$), that is, there exists some $w\in E$ such that $ D^-_M f_i (\hat{x})(w) > 0$, for all $i \in \lbrace k, ..., l \rbrace$. Let us set $K:=\R^{l-k+1}_+$, the positive cone of $\R^{l-k+1}$ and 
\begin{eqnarray*}
S:=(-D^-_M f_{k}(\hat{x}),...,-D^-_M f_{l} (\hat{x})): E &\to& \R^{l-k+1}\\
f:=-D^-_M f_{k-1}(\hat{x}): E &\to& \R.
\end{eqnarray*}
 Since the maps $-D^-_M f_i (\hat{x})$, $i=k-1,...l$, are sublinear functionals (hence convex functions), by using the generalized Farkas's lemma in \cite[Corollary 2. p. 92]{Hl}, there exists $\mu^k, ..., \mu^l \in \mathbb{R}_{+}$, such that $D^-_M f_{k-1}(\hat{x}) + \sum_{i=k}^l \mu^i D^-_M f_i (\hat{x}) \leq 0$. We set: 
\begin{equation*}
\lambda^i : =
\begin{cases}
% 0,   & i \in \{ 0, ..., k-2 \}\\
 1,   & i = k-1\\
 \mu^i,   & i \in \{ k, ..., l \}\\
 0,   & i \in \{0, …, m \}\setminus \{k-1, …,l \}.
\end{cases}
\end{equation*}
Therefore, we have proven that if $A_{l} \neq \emptyset$, parts $(i), (ii), (iii)$ of Theorem \ref{thmp} also hold (recall that $f_i(\hat{x})=0$, for all $i\in \lbrace 1,...,l\rbrace$). 

Finally, if moreover we assume $(c)$, that is, there exists $w\in E$ such that,  $D^-_M f_i(\hat{x})(w) > 0$ whenever $i \in \lbrace 1, ..., m \rbrace$ satisfies $f_i(\hat{x}) = 0$, then we can take $\lambda^0=1$. Indeed,  suppose by contradiction that $\lambda^0=0$ then from part $(i)$ we have $(\lambda^{1}, ..., \lambda^{m}) \neq (0, ..., 0)$. Since, $\lambda^i=0$ whenever $i\in \lbrace 1, ..., m \rbrace$ and $f_i(\hat{x})\neq 0$ by part $(ii)$, we get that $\sum_{i=1}^m  \lambda^{i} D^-_M f_{i}(\hat{x})(w) >0$, which contradicts the part $(iii)$. Hence, $\lambda^0\neq 0$. Dividing  if necessary the multipliers $(\lambda^i)_{0\leq i \leq m}$ by $\lambda^0\neq 0$, we can assume that $\lambda^0=1$. This ends the proof.
\end{proof}

%\paragraph{\bf Applications to generalized Farkas lemma.} 
As we mentioned in the introduction, our result generalizes those in  \cite{Bl, Yi}. As a consequence, we also extend  the result of B.H. Pourciau in \cite[Theorem 6, p. 445]{Po}. Indeed,  on the one hand we deal with any topological vector space instead of a finite dimensional space and on the other hand, we deal with a class of functions more general than the class of convex functions.  Let $E$ be a topological vector space, $\Omega$ be an open  subset of $E$ and  $F:\Omega \to \R^n$ be a function defined by $F(x)=(f_1(x),...,f_n(x))$ for all $x\in \Omega$. We say that $F$ is $D^+_M$-differentiable at some point $\hat{x}\in \Omega$, if $f_i$ is $D^+_M$-differentiable at $\hat{x}$ for all $i \in \lbrace 1,...,n\rbrace$. In this case, we denote
$$D^+_M F(\hat{x}):=(D^+_M f_1(\hat{x}),...,D^+_M f_n(\hat{x})).$$
We say that $F$ is {\it $D^+_M$-invex} at $\hat{x}\in \Omega$, if $F$ is $D^+_M$-differentiable at $\hat{x}$ and satisfies: $ \forall x \in \Omega, \hspace{1mm} \exists \eta=\eta(x,\hat{x})\in E:$
 $$ \forall w\in (\R^+)^n, \langle w, D^+_M F(\hat{x})(\eta) \rangle \leq\langle w, F(x)- F(\hat{x})\rangle.$$
Real-valued invex functions were introduced by Hanson \cite{Ha} as a generalization of convex functions. The notion of $K$-invexity with respect a convex cone $K$ for a vector-valued  function, was given in \cite{Cr}. We refer to \cite{YS, Re} for different type of $K$-invexity and the relationship between them. The definition of {\it $D^+_M$-invexity} that we have given above corresponds to the so-called {\it "restricted K-invex in the limit at $\hat{x}$"} in \cite{YS, Re} (here, we have taken $K=(\R^+)^n$) replacing the {\it generalised directional derivative} of Clarke \cite{Cf} with the modified upper Dini-derivative.
We say that $F$ is {\it $D^+_M$-pseudoconvex} at $\hat{x}\in \Omega$, if $F$ is $D^+_M$-differentiable at $\hat{x}$ and satisfies: $ \forall x \in \Omega \hspace{1mm} \exists \eta=\eta(x,\hat{x})\in E:$
 $$ \forall w\in (\R^+)^n, \langle w, D^+_M F(\hat{x})(\eta) \rangle \geq 0  \Longrightarrow \langle w, F(x)- F(\hat{x})\rangle \geq 0.$$
Clearly, if $F$ is {\it $D^+_M$-invex} at $\hat{x}$, then it is {\it $D^+_M$-pseudoconvex} at $\hat{x}$ (for a more general concept of pseudoconvex functions we refer to \cite{HP, Iv} and the references therein).

We give the following extension of \cite[Theorem 6, p. 445]{Po} as an immediate consequence of Theorem \ref{thmp1}.
\begin{corollary} \label{cor1} Let $E$ be a topological vector space, $\Omega$ be a nonempty open  subset of $E$ and $f_i: \Omega \to \R$ be a  function  for all $i \in \lbrace 0, ..., m \rbrace$. Let $\hat{x}$ be a solution of the problem $\mathcal{Q}_1$. Assume that $F=(f_0, f_1,...,f_m)$ is $D^+_M$-pseudoconvex at $\hat{x}$  and that for all $j\in \lbrace i \in \lbrace 1, ..., m \rbrace: f_{i}(\hat{x}) < 0 \rbrace$, $f_{j}$ is upper semicontinuous at $\hat{x}$. Then, there exists some nonzero $(\lambda^0,...,\lambda^m)\in \R^{m+1}$ satisfying

$(a)$ $\lambda^i\geq 0$ for $i=0,...,m$,

$(b)$ $\lambda^i f_i (\hat{x})=0$ for $i=1,...,m$ and

$(c)$  if $f=\sum_{i=0}^m \lambda^i f_i$ and $x\in \Omega$, then $\lambda^0 f_0(\hat{x})=f(\hat{x}) \leq f(x)$.
\end{corollary}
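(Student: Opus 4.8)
The plan is to read this off directly from Theorem \ref{thmp1}, since the statement is the pseudoconvex reformulation of the multiplier rule for the minimization problem $\mathcal{Q}_1$. First I would check that the hypotheses of Theorem \ref{thmp1} are in force. The assumption that $F=(f_0,f_1,\dots,f_m)$ is $D^+_M$-pseudoconvex at $\hat{x}$ includes, by definition, that $F$ is $D^+_M$-differentiable at $\hat{x}$, i.e. every $f_i$ is $D^+_M$-differentiable at $\hat{x}$; this is precisely assumption $(b)$ of Theorem \ref{thmp1}. The hypothesis that $f_j$ is upper semicontinuous at $\hat{x}$ for each $j$ with $f_j(\hat{x})<0$ is assumption $(a)$. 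Applying Theorem \ref{thmp1} to the solution $\hat{x}$ of $\mathcal{Q}_1$ then yields $\lambda^0,\dots,\lambda^m\in\R_+$ satisfying $(i)$, $(ii)$, $(iii)$. Items $(i)$ and $(ii)$ give immediately the nonzeroness of $(\lambda^0,\dots,\lambda^m)$ together with conclusions $(a)$ and $(b)$ of the corollary, so only conclusion $(c)$ remains.

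The key step is to feed the multiplier vector itself into the pseudoconvexity inequality. Fix an arbitrary $x\in\Omega$ and let $\eta=\eta(x,\hat{x})\in E$ be the direction provided by the definition of $D^+_M$-pseudoconvexity. Take $w:=(\lambda^0,\dots,\lambda^m)$, which is an admissible test vector because each $\lambda^i\geq 0$, i.e. $w\in(\R^+)^{m+1}$ (here the dimension $n$ in the definition of $F$ equals $m+1$). Part $(iii)$ of Theorem \ref{thmp1}, applied to the direction $u=\eta$, reads $\langle w, D^+_M F(\hat{x})(\eta)\rangle=\sum_{i=0}^m\lambda^i D^+_M f_i(\hat{x})(\eta)\geq 0$, which is exactly the premise of the pseudoconvexity implication. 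Hence the conclusion of that implication gives $\langle w, F(x)-F(\hat{x})\rangle\geq 0$, that is $\sum_{i=0}^m\lambda^i f_i(x)\geq\sum_{i=0}^m\lambda^i f_i(\hat{x})$, i.e. $f(x)\geq f(\hat{x})$ for $f=\sum_{i=0}^m\lambda^i f_i$.

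To finish $(c)$ I would evaluate $f(\hat{x})$ using complementary slackness: $f(\hat{x})=\lambda^0 f_0(\hat{x})+\sum_{i=1}^m\lambda^i f_i(\hat{x})$, and by $(ii)$ each term $\lambda^i f_i(\hat{x})$ with $i\geq 1$ vanishes, so $f(\hat{x})=\lambda^0 f_0(\hat{x})$. Combined with the previous inequality this yields $\lambda^0 f_0(\hat{x})=f(\hat{x})\leq f(x)$ for all $x\in\Omega$, as required. There is no substantial obstacle here: the argument is a direct substitution, and the only point requiring care is the bookkeeping that the multiplier vector $(\lambda^0,\dots,\lambda^m)$ is a legitimate choice of $w$ (it lies in the positive cone $(\R^+)^{m+1}$) and that the index count matches the number of components of $F$. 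The real content has already been spent in proving Theorem \ref{thmp1}; the corollary only repackages its conclusion through the definition of $D^+_M$-pseudoconvexity.
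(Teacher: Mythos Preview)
Your proposal is correct and follows essentially the same route as the paper's own proof: apply Theorem~\ref{thmp1} to obtain the multipliers and the inequality $\sum_{i=0}^m\lambda^i D^+_M f_i(\hat{x})(u)\geq 0$, then plug $w=(\lambda^0,\dots,\lambda^m)$ and $u=\eta(x,\hat{x})$ into the $D^+_M$-pseudoconvexity implication to get $f(x)\geq f(\hat{x})$, and finally use complementary slackness to rewrite $f(\hat{x})=\lambda^0 f_0(\hat{x})$. Your write-up is in fact a bit more careful than the paper's in checking that the hypotheses of Theorem~\ref{thmp1} are met and that the index count $n=m+1$ matches.
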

\begin{proof}   We apply Theorem \ref{thmp1} and we get some nonzero $(\lambda^0,...,\lambda^m)\in \R^{m+1}$ satisfying $(a)$, $(b)$ and $\sum_{i=0}^m  \lambda^{i} D^+_M f_{i}(\hat{x})(u) \geq 0$, for all $u\in E$.  Set $f=\sum_{i=0}^m \lambda^i f_i$. Since, $F$ is $D^+_M$-pseudoconvex at $\hat{x}$, we have  for all $ x\in \Omega$, there exists $\eta=\eta(x, \hat{x})\in E$ such that
 $$ \forall w\in (\R^+)^n, \langle w, D^+_M F(\hat{x})(\eta) \rangle \geq 0  \Longrightarrow \langle w, F(x)- F(\hat{x})\rangle \geq 0.$$
In particular, with $w=(\lambda^0,...,\lambda^m)$, since  $\sum_{i=0}^m \lambda^i D^+_M f_i(\hat{x})(\eta)\geq 0$, we get that $f(x)-f(\hat{x})\geq 0$. On the other hand, thanks to $(b)$, we see that $\lambda^0 f_0(\hat{x})=f(\hat{x})$. Hence, we give part $(c)$.
\end{proof}

%%%%%%%%%%%%%%%%%%%%%%%%%%%%%
%%%%%%%%%%%%%%%%%%%%%%%%%%%%%
%%%%%%%%%%%%%%%%%%%%%%%%%%%%%%

\section{Optimization with inequality and equality constraint} 
We give below our second main result. In comparison with existing results (see for instance \cite{Bl, Yi}) we weaken assumptions of Gateaux-differentiability of the functions in the inequality constraints, by assuming that the functions are Lipschitz in a neighborhood of the optimal solution and by using Dini-differentiability instead of Clarke's subdifferential. Let $(E,\|\cdot\|_E)$ and $(W,\|\cdot\|_W)$  be Banach spaces, $\Omega$ be an open subset of $E$.  Let  $f_i: \Omega \to \R$,  $i \in \lbrace 0, ..., m \rbrace$ and $h: \Omega \to W$ be  mappings. Consider the following problem:
\begin{equation*}
(\mathcal{H}_1)
\left \{
\begin{array}
[c]{l}
\max f_0\\
x\in \Omega \\
\forall i \in \lbrace 1, ..., m \rbrace, f_{i}(x) \geq 0\\
h(x)=0
\end{array}
\right. 
\end{equation*}

\begin{theorem} \label{thmp2}  Let $\hat{x}$ be a solution of the problem $\mathcal{H}_1$. We assume that:

%$(a)$   for all $j\in \lbrace i \in \lbrace 1, ..., m \rbrace : f_{i}(\hat{x}) < 0 \rbrace$, $f_j$ is lower semicontinuous at $\hat{x}$ and  for all $j\in \lbrace i \in \lbrace 1, ..., m \rbrace : f_{i}(\hat{x}) = 0 \rbrace$, $f_j$ is Lipschitz in a neighborhood of $\hat{x}$.

$(a)$ for all $i \in \lbrace 0, ..., m \rbrace$, $f_i$ is Lipschitz in a neighborhood of $\hat{x}$ (in particular $f_i$ is $D_M^-$-differentiable at $\hat{x}$).

$(b)$ $h$ is  Fr\'echet-differentiable in a neighborhood of  $\hat{x}$, the Fr\'echet-differential $d_F h(\cdot)$ is continuous at $\hat{x}$ and $\textnormal{Im} d_F h(\hat{x})$ is closed.

$(c)$ $\textnormal{Ker}(d_F h(\hat{x}))$ is a complemented subspace of $E$, that is, there exists a closed subspace $E_1$ of $E$ such that $E=\textnormal{Ker}(d_F h(\hat{x}))\oplus E_1$.
\vskip5mm

 Then there exist $\lambda^{0}, ..., \lambda^{m} \in \mathbb{R}_{+}$ and $w^*_0\in W^*$ such that the following conditions hold:
\begin{itemize}
\item[(i)] $(\lambda^{0}, ..., \lambda^{m},w^*_0) \neq (0, ..., 0, 0_{W^*})$.
\item[(ii)] $\forall i \in \lbrace 1, ..., m \rbrace$, $\lambda^{i} f_{i}(\hat{x}) = 0$.
\item[(iii)] $\sum_{i=0}^m  \lambda^{i} D^-_M f_{i}(\hat{x})(u) + \langle w^*_0 , d_F h(\hat{x})(u)\rangle \leq 0$, for all $u\in E$.
\end{itemize}
where, $(\lambda^{0}, ..., \lambda^{m},w^*_0)$ can be chosen as follows

$\bullet$ $(\lambda_0,..., \lambda^{m},w^*_0)=(0,...,0,w^*_0)$, with $w^*_0\neq 0$ if $d_F h(\hat{x})$ is not onto.

$\bullet$ $(\lambda_0,..., \lambda^{m})\neq (0,...,0)$, if $d_F h(\hat{x})$ is onto. If moreover, there exists $w\in \textnormal{Ker}(d_F h(\hat{x}))$ such that, $D^-_M f_j(\hat{x})(w) > 0$, whenever $j\in \lbrace i \in \lbrace 1, ..., m \rbrace : f_i(\hat{x}) = 0 \rbrace$, then we can chose $\lambda_0=1$.
\end{theorem}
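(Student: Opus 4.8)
The plan is to dichotomize according to whether $T := d_F h(\hat{x})$ is surjective, and in the surjective case to use the implicit function theorem to eliminate the equality constraint, reducing $(\mathcal{H}_1)$ to an inequality-constrained problem of type $(\mathcal{P}_1)$ on $N := \textnormal{Ker}\, T$, to which Theorem \ref{thmp} applies. First, if $\textnormal{Im}\, T \neq W$, then since $\textnormal{Im}\, T$ is closed (assumption $(b)$), the Hahn--Banach theorem provides a nonzero $w^*_0 \in W^*$ vanishing on $\textnormal{Im}\, T$; taking $\lambda^0 = \cdots = \lambda^m = 0$ makes $(i)$--$(iii)$ hold trivially, since $\langle w^*_0, d_F h(\hat{x})(u)\rangle = 0$ for all $u$. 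This settles the first bullet, so from now on I would assume $T$ is onto.

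In the surjective case I would use the splitting $E = N \oplus E_1$ from $(c)$. The restriction $T|_{E_1}\colon E_1 \to W$ is a continuous linear bijection between Banach spaces, hence an isomorphism by the open mapping theorem. Writing $\hat{x} = \hat{n} + \hat{e}_1$ and regarding the equation $h(n+e_1)=0$ near $\hat{x}$, the partial derivative in $e_1$ at $\hat{x}$ is exactly $T|_{E_1}$, which is invertible; the implicit function theorem (valid here because $d_F h$ is continuous at $\hat{x}$) then yields a neighborhood $U$ of $\hat{n}$ in $N$ and a map $\varphi\colon U \to E_1$, differentiable at $\hat{n}$, with $\varphi(\hat{n})=\hat{e}_1$ and $h(n+\varphi(n))=0$ on $U$, these being the only zeros of $h$ near $\hat{x}$. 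Differentiating the identity $h(n+\varphi(n))=0$ and using the injectivity of $T$ on $E_1$ forces $d\varphi(\hat{n})=0$, so $\Phi(n):=n+\varphi(n)$ satisfies $\Phi(\hat{n})=\hat{x}$ and $d\Phi(\hat{n})$ equals the inclusion $N\hookrightarrow E$. Hence $\hat{n}$ solves the reduced problem $\max g_0$ on $U$ subject to $g_i\geq 0$, where $g_i := f_i\circ\Phi$.

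Next I would transport the derivatives. Each $g_i$ is Lipschitz near $\hat{n}$, hence $D^-_M$-differentiable there. From the Fréchet expansion $\Phi(\hat{n}+tv)=\hat{x}+tv+o(t)$ and the Lipschitz property of $f_i$, a direct estimate gives $D^- g_i(\hat{n})(v) = D^- f_i(\hat{x})(v)$ for all $v\in N$; taking the infimum defining the modified derivative over $w\in N\subseteq E$ (a smaller index set than the one used for $f_i$ on $E$) then yields $D^-_M g_i(\hat{n})(v) \geq D^-_M f_i(\hat{x})(v)$ for $v\in N$. Applying Theorem \ref{thmp} to the reduced problem produces $\lambda^0,\dots,\lambda^m\geq 0$, not all zero, with $\lambda^i f_i(\hat{x})=0$ and $\sum_i \lambda^i D^-_M g_i(\hat{n})(v)\leq 0$ on $N$; combining this with the previous inequality and $\lambda^i\geq 0$ gives $\Psi(v):=\sum_i \lambda^i D^-_M f_i(\hat{x})(v)\leq 0$ for all $v\in N$. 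By Propositions \ref{prop1} and \ref{prop10}, $\Psi$ is a continuous superlinear functional on $E$.

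It then remains to produce $w^*_0\in W^*$ with $\Psi(u)+\langle w^*_0, Tu\rangle\leq 0$ on $E$, and this extension step is what I expect to be the main obstacle. Setting $p:=-\Psi$ (continuous, sublinear, with $p\geq 0$ on $N$), I would define $\hat{p}(u_1):=\inf_{v\in N} p(v+u_1)$ on $E_1$; subadditivity of $p$ makes $\hat{p}$ sublinear, and the two-sided bound $-p(-u_1)\leq \hat{p}(u_1)\leq p(u_1)$, which uses precisely $p\geq 0$ on $N$ and the Lipschitz continuity of the $D^-_M f_i(\hat{x})$, shows $\hat{p}$ is finite and continuous. Hahn--Banach yields a (necessarily continuous) linear $\ell_1\leq \hat{p}$ on $E_1$, and $w^*_0 := \ell_1\circ (T|_{E_1})^{-1}\in W^*$ satisfies, for $u=u_N+u_1$, the inequality $\langle w^*_0, Tu\rangle = \ell_1(u_1)\leq \hat{p}(u_1)\leq p(u_N+u_1)$, which is exactly $(iii)$; conditions $(i)$ and $(ii)$ are inherited from Theorem \ref{thmp}. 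Finally, if some $w\in N$ has $D^-_M f_j(\hat{x})(w)>0$ at every active index $j$, then $D^-_M g_j(\hat{n})(w)\geq D^-_M f_j(\hat{x})(w)>0$, so the Slater-type hypothesis $(c)$ of Theorem \ref{thmp} holds for the reduced problem and permits the normalization $\lambda^0=1$.
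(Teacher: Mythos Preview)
Your argument is correct and follows the same architecture as the paper's proof: split on whether $T=d_F h(\hat{x})$ is onto, use Hahn--Banach in the non-surjective case, and in the surjective case apply the implicit function theorem on $N=\textnormal{Ker}\,T$ to eliminate the equality constraint, invoke Theorem~\ref{thmp} for the reduced problem $g_i=f_i\circ\Phi$, and then extend the resulting inequality from $N$ to $E$ by a Hahn--Banach step producing $w_0^*=\ell_1\circ(T|_{E_1})^{-1}$.

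Two technical points in your write-up differ from, and are sharper than, the paper's own treatment. First, where the paper's Claim asserts the \emph{equality} $D^-_M g_i(\hat a)=D^-_M f_i(\hat x)$ on $N$ (derived from $D^- g_i(\hat a)=D^- f_i(\hat x)$ on $N$), you correctly observe that the defining infimum for $D^-_M g_i$ runs only over $N\subset E$, so one obtains only $D^-_M g_i(\hat n)(v)\ge D^-_M f_i(\hat x)(v)$; since the $\lambda^i$ are nonnegative this inequality is exactly what is needed, and your deduction of $\Psi\le 0$ on $N$ goes through cleanly. Second, in the extension step the paper dominates a linear $e^*$ only by $p|_{E_1}$ (where $p=-\Psi$) and then asserts the global inequality; your refinement, dominating instead by the sublinear envelope $\hat p(u_1)=\inf_{v\in N}p(v+u_1)$ before applying Hahn--Banach, is precisely what guarantees $\ell_1(u_1)\le p(u_N+u_1)$ for \emph{every} $u_N\in N$, making the passage from the kernel inequality to the full inequality $(iii)$ transparent. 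The verification that $\hat p$ is finite via $-p(-u_1)\le\hat p(u_1)\le p(u_1)$, using $p\ge 0$ on $N$, is correct. These are refinements within the same strategy rather than a different route.
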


\begin{proof} We consider two cases. If $\textnormal{Im} d_F h(\hat{x})\neq W$, then since $\textnormal{Im} d_F h(\hat{x})$ is closed, then by the Hahn-Banach theorem, there exists some $w^*\in W^*\setminus \lbrace 0\rbrace$ such that $w^*\circ d_F h(\hat{x})=0$. Thus, the theorem work with $(\lambda_0,..., \lambda^{m})=(0,...,0)$ and $w^*\neq 0$.

Now, suppose that $\textnormal{Im} d_F h(\hat{x})= W$. In this case, the restriction $d_F h(\hat{x})_{|E_1}: E_1 \to W$ is an isomorphism.  Let $(\hat{a},\hat{b})\in \textnormal{Ker}(d_F h(\hat{x}))\times E_1$ such that $\hat{x}=\hat{a}+\hat{b}$. Since, $h(\hat{x})=0$, then  by the implicit function theorem (see for example \cite{Lang}), there exists a neighborhood $U$ of $\hat{a}$ in $\textnormal{Ker}(d_F h(\hat{x}))$, a neighborhood $V$  of $\hat{b}$ in $E_1$ such that $U+V\subset \Omega$ and a unique continuous function  $\varphi : U\to V$  such that

$(\alpha)$ $\varphi(\hat{a})=\hat{b}$.

$(\beta)$ $\forall x\in U$, $h(x+\varphi(x))=0$.

$(\gamma)$ $\varphi$ is  Fr\'echet  differentiable at $\hat{a}$, and $d_F \varphi(\hat{a})=0$.
\vskip5mm
Let us define  $g_i: U\to \R$, for all $i\in \lbrace 0,...,m\rbrace$ by $g_i(x)=f_i(x+\varphi(x))$ for all $x\in U\subset \textnormal{Ker}(d_F h(\hat{x}))$.  Notice that $g_i(\hat{a})=f_i(\hat{x})$ for all $i\in \lbrace 0,...,m\rbrace$. By assumption and part $(\beta)$ above, we see that $\hat{a} \in U$ is a solution of the following problem in the Banach space $\textnormal{Ker}(d_F h(\hat{x}))$
\begin{equation*}
(\mathcal{H}_1)
\left \{
\begin{array}
[c]{l}
\max g_0\\
x\in U\\
\forall i \in \lbrace 1, ..., m \rbrace, g_{i}(x) \geq 0
\end{array}
\right. 
\end{equation*}
Now, using Theorem \ref{thmp}, we get some real numbers $\lambda^0\geq 0,..., \lambda^{m}\geq 0$ such that 

$(i')$ $(\lambda^0,..., \lambda^{m})\neq (0,...,0)$ 

$(ii')$ $\forall i \in \lbrace 1, ..., m \rbrace$, $\lambda^{i} g_{i}(\hat{a}) = 0$, that is, $\lambda^{i} f_{i}(\hat{x}) = 0$ and 

$(iii')$ $\sum_{i=0}^m  \lambda^{i} D^-_M g_{i}(\hat{a})(\mu) \leq 0, \hspace{1mm} \forall \mu\in \textnormal{Ker}(d_F h(\hat{x})).$
\vskip5mm
{\bf Claim.} $D^-_M g_i(\hat{a})(\mu)= D^-_M f_i(\hat{x})(\mu)$, for all $\mu \in \textnormal{Ker}(d_F h(\hat{x}))$ and for all $ i \in \lbrace 0, ..., m \rbrace$.

\begin{proof}[Proof of the claim] For all $\mu \in \textnormal{Ker}(d_F h(\hat{x}))$, we have $\varphi(\hat{a}+t\mu)=\varphi(\hat{a}) + td_F \varphi(\hat{a}) (\mu) +o(t)=\varphi(\hat{a})+o(t)$ (where $\lim_{t\to 0}\frac{\|o(t)\|_E}{t}=0$). Thus, we have for all $i \in \lbrace 1, ..., m \rbrace$,
\begin{eqnarray*}
t^{-1} [g_i(\hat{a}+t\mu)-g_i(\hat{a})]&=& t^{-1} [f_i(\hat{a}+t\mu +\varphi(\hat{a}+t\mu))-f_i(\hat{x})]\\
&=& t^{-1}[f_i(\hat{a}+t\mu +\varphi(\hat{a}) +td_F \varphi(\hat{a}) (\mu) +o(t))-f_i(\hat{x})]\\
&=& t^{-1}[f_i(\hat{x}+t\mu +o(t))-f_i(\hat{x})]\\
&=& t^{-1}[f_i(\hat{x}+t\mu +o(t))-f_i(\hat{x}+t\mu)] + t^{-1}[f_i(\hat{x}+t\mu)-f_i(\hat{x})]
\end{eqnarray*}
Since $f_i$ is Lipschitz  in a neighborhood of  $\hat{x}$, there exists a constant $\textnormal{Lip}(f_i)\geq 0$ such that 
$$ t^{-1}|f_i(\hat{x}+t\mu +o(t))-f_i(\hat{x}+t\mu)|\leq \textnormal{Lip}(f_i) \frac{\|o(t)\|_E}{t}\to 0, \textnormal{ when } t\to 0^+.$$
Passing to the {\it "liminf"} in the above equalities, we get that $D^- g_i(\hat{a})(\mu)=D^- f_i(\hat{x})(\mu)$ for all $\mu \in \textnormal{Ker}(d_F h(\hat{x}))$. It follows that  $D^-_M g_i(\hat{a})(\mu)=D^-_M f_i(\hat{x})(\mu)$ for all $\mu \in \textnormal{Ker}(d_F h(\hat{x}))$. This ends the proof of the claim.
\end{proof}
We return to the proof of the theorem. Using the claim and  $(iii')$ we have 
\begin{eqnarray}\label{formula21}
\sum_{i=0}^m  \lambda^{i} D^-_M f_{i}(\hat{x}) (\mu) \leq 0, \hspace{1mm} \mu\in \textnormal{Ker}(d_F h(\hat{x})). 
\end{eqnarray}
Since the map $-\sum_{i=0}^m  \lambda^{i} D^-_M f_{i}(\hat{x})$ is (Lipschitz) continuous sublinear functional (see Proposition \ref{prop1}),  then there exists a continuous linear functional $e^*\in E^*_1$ such that $e^*(\nu)\leq -\sum_{i=0}^m  \lambda^{i} D^-_M f_{i}(\hat{x})(\nu)$ for all $\nu \in E_1$ (see \cite[p. 177-220]{NB}). Let us set $w^*_0=e^*\circ (d_F h(\hat{x})_{|E_1})^{-1}\in W^*$. Then, we have
\begin{eqnarray}\label{formula22}
(\sum_{i=0}^m  \lambda^{i} D^-_M f_{i}(\hat{x}))_{|E_1}  +w^*_0\circ d_F h(\hat{x})_{|E_1} \leq 0.
\end{eqnarray}
Since $ d_F h(\hat{x})\circ I_{\textnormal{Ker}(d_F h(\hat{x}))}=0$, by using the formulas (\ref{formula21}) and (\ref{formula22}), we obtain
\begin{eqnarray} \label{EF}
\sum_{i=0}^m  \lambda^{i} D^-_M f_{i}(\hat{x}) (u) +\langle w^*_0, d_F h(\hat{x})(u) \rangle \leq 0,\hspace{1mm} \forall u\in E=\textnormal{Ker}(d_F h(\hat{x}))\oplus E_1,
\end{eqnarray}
where, $(\lambda^0,..., \lambda^{m})\neq (0,...,0)$. If moreover we assume that there exists $w\in \textnormal{Ker}(d_F h(\hat{x}))$ such that, $D^-_M f_j(\hat{x})(w) > 0$, whenever $j\in \lbrace i \in \lbrace 1, ..., m \rbrace : f_i(\hat{x}) = 0 \rbrace$, then we can chose $\lambda_0=1$. Indeed, suppose by contradiction that $\lambda^0=0$ then we have $(\lambda^{1}, ..., \lambda^{m}) \neq (0, ..., 0)$. Since, $\lambda^i=0$ whenever $i\in \lbrace 1, ..., m \rbrace$ and $f_i(\hat{x})\neq 0$ by part $(ii')$, we get that $\sum_{i=1}^m  \lambda^{i} D^-_M f_{i}(\hat{x})(w) >0$, which contradicts  $(\ref{EF})$. Hence, $\lambda^0\neq 0$. Dividing  if necessary the multipliers $(\lambda^i)_{0\leq i \leq m}$ by $\lambda^0\neq 0$, we can assume that $\lambda^0=1$. This ends the proof.
\end{proof}
\begin{remark} In the above theorem, if we assume moreover that the functions $f_i$, $i \in \lbrace 1, ..., m \rbrace$, are Gateaux-differentiable, then the inequality in $(iii)$ becomes an equality.
\end{remark}
\section*{Acknowledgement}
This research has been conducted within the FP2M federation (CNRS FR 2036) and  SAMM Laboratory of the University Paris Panthéon-Sorbonne.

\bibliographystyle{amsplain}

\end{document}